\documentclass[
11pt,%
tightenlines,%
twoside,%
onecolumn,%
nofloats,%
nobibnotes,%
nofootinbib,%
superscriptaddress,%
noshowpacs,%
centertags]%
{revtex4}
\usepackage{ljm_withoutLJM}
\graphicspath{{./res/}}

\setcounter{page}{1}

\begin{document}

\titlerunning{Inverse problems for Dirac operators with constant delay }

\authorrunning{Buterin, Djuri\'c}

\vspace*{-5mm}

\title{Inverse problems for Dirac operators with constant delay: uniqueness, characterization, uniform stability}

\author{\firstname{Sergey}~\surname{Buterin}}
\email[E-mail: ]{buterinsa@info.sgu.ru}
\author{\firstname{Neboj\v{s}a}~\surname{Djuri\'c}}
\email[E-mail: ]{nebojsa.djuric@aggf.unibl.org}

\affiliation{ Department of Mathematics, Saratov State University}

\affiliation{ Faculty of Architecture, Civil Engineering and Geodesy, University of Banja Luka}

\begin{abstract}
We initiate studying inverse spectral problems for Dirac-type functional-differential operators with constant delay. For simplicity, we
restrict ourselves to the case when the delay parameter is not less than one half of the interval. For the considered case, however, we give
answers to the full range of questions usually raised in the inverse spectral theory. Specifically, reconstruction of two complex
$L_2$-potentials is studied from either complete spectra or subspectra of two boundary value problems with one common boundary condition. We
give conditions on the subspectra that are necessary and sufficient for the unique determination of the potentials. Moreover, necessary and
sufficient conditions for the solvability of both inverse problems are obtained. For the inverse problem involving the complete spectra, we
establish also uniform stability in each ball. For this purpose, we use recent results on uniform stability of sine-type functions with
asymptotically separated zeros.
\end{abstract}

\subclass{34A55; 34K29} \keywords{Dirac-type operator, constant delay, functional-differential operator, inverse spectral problem, uniqueness
theorem, spectrum characterization, uniform stability.}

\maketitle

\section{Introduction and main results}\label{sec1}

\medskip
In the last decade, there appeared a significant interest in inverse problems for Sturm--Liouville-type operators with delay $a\in(0,\pi)$
(see \cite{Pik91, FrYur12, Yang, Ign18, BondYur18-1, VPV19, DV19, ButYur19, Dur, VPVC20, DB21, DB21-2, BMSh21, DB22} and references therein):
\begin{equation}\label{1.0}
\ell y:= -y''(x)+q(x)y(x-a)=\lambda y(x), \quad 0<x<\pi.
\end{equation}
In particular, it is known that the complex square-integrable potential $q(x)$ vanishing on $(0,a)$ is uniquely determined by specifying the
spectra of two boundary value problems for equation (\ref{1.0}) with a common boundary condition at zero as soon as $a\in[2\pi/5,\pi).$ The
recent series of papers \cite{DB21, DB21-2, DB22} establishes, however, that it is never possible for $a\in(0,2\pi/5).$ For more details, see
the survey in \cite{BMSh21}. Classical methods for solving inverse problems for differential operators \cite{Mar, FY01, Lev, GasLev, GasDzab,
Mal99, MykPuy, Hry11-2, GorYur, SavShk10, Hry11-1, Mak} are not applicable for operators with delay as well as for other classes of nonlocal
operators. However, they are often more adequate for modelling various physical processes frequently possessing a nonlocal nature \cite{Mysh,
BellCook, Nor, Hal, Skub, Mur1,Mur2,WY}.

The present paper opens a new chapter of the inverse spectral theory for operators with constant delay by introducing and studying the
Dirac-type system of the form
\begin{equation}\label{1.1}
By'(x)+Q(x)y(x-a)=\lambda y(x), \quad 0<x<\pi,
\end{equation}
where
$$
    B = \begin{bmatrix} 0 & 1 \\ -1 & 0 \end{bmatrix}, \quad
    y(x) = \begin{bmatrix} y_1(x) \\ y_2(x) \end{bmatrix}, \quad
    Q(x) = \begin{bmatrix} q_{11}(x) & q_{12}(x) \\ q_{21}(x) & q_{22}(x) \end{bmatrix},
$$
while $q_{jk}(x),$ $j,k=1,2,$ are complex-valued functions in $L_2(0,\pi),$ and $Q(x)=0$ on $(0,a).$


For any fixed pair of $\nu,j\in\{1,2\},$ we denote by ${\cal B}_{\nu,j}(Q)$ the boundary value problem for equation~(\ref{1.1}) under the
boundary conditions
$$
y_\nu(0) = y_j(\pi) = 0.
$$

In the classical case $a=0,$ inverse problems for (\ref{1.1}) were studied in \cite{Lev, GasLev, GasDzab, Mal99, MykPuy, Hry11-2, GorYur,
Mak} and other works. In particular, two canonical forms can be distinguished (see, e.g., \cite{Lev}):
\begin{equation}\label{1.4}
q(x):=q_{11}(x)=-q_{22}(x), \quad p(x):=q_{12}(x)=q_{21}(x)
\end{equation}
and
\begin{equation}\label{1.4.1}
q_{12}(x)=q_{21}(x)=0.
\end{equation}
However, the forms (\ref{1.4}) and (\ref{1.4.1}) are not canonical anymore when $a>0$ because of the non-locality. Moreover, Remark~2 in the
next section reveals that the spectrum of any problem ${\cal B}_{\nu,j}(Q)$ may carry essential information only on the functions
$q_{11}(x)-q_{22}(x)$ and $q_{12}(x)+q_{21}(x).$ For this reason, we assume everywhere (except the first part of Section~2 before Remark~2)
that condition (\ref{1.4}) is fulfilled. Another crucial difference from the case $a=0$ is described below in Remark 1. We also note that
various aspects of inverse spectral problems for Dirac-type systems with {\it integral} delay were studied in \cite{BB17, Bon18-2,
BondBut-20}.

For simplicity, we restrict ourselves to the case $a\ge\pi/2,$ when the dependence of the characteristic functions of the problems ${\cal
B}_{\nu,j}(Q)$ on $Q(x)$ is linear (for more details, see the next section). The nonlinear case $a\in(0,\pi/2)$ requires a separate
investigation. But for the considered case we give answers to the full range of questions usually raised in the inverse spectral theory.

Mentioned Remark~2 also indicates that, from the informational point of view, it is sufficient to specify the spectra of the problems ${\cal
B}_{1,1}(Q)$ and ${\cal B}_{1,2}(Q).$ We begin with the following theorem.

\begin{theorem}\label{th1}
So let condition (\ref{1.4}) be fulfilled. For $j\in\{1,2\},$ the problem ${\cal B}_{1,j}(Q)$ has infinitely many eigenvalues
$\lambda_{n,j},$ $n\in{\mathbb Z},$ of the form
\begin{equation}\label{1.3}
\lambda_{n,j}=n+\frac{1-j}2+\varkappa_{n,j}, \quad \{\varkappa_{n,j}\}\in l_2.
\end{equation}
\end{theorem}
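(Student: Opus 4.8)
The plan is to derive the characteristic function of the problem ${\cal B}_{1,j}(Q)$ explicitly and then read off the eigenvalue asymptotics from it. First I would construct the solution of equation~(\ref{1.1}) satisfying the boundary condition $y_1(0)=0$ at the left endpoint. Since $Q(x)=0$ on $(0,a)$, on that initial segment the system reduces to the unperturbed Dirac equation $By'(x)=\lambda y(x)$, whose fundamental matrix is the rotation $\exp(-\lambda B x)=\begin{bmatrix}\cos\lambda x & \sin\lambda x\\ -\sin\lambda x & \cos\lambda x\end{bmatrix}$; so on $(0,a)$ the relevant solution is, up to a constant, $y(x)=(\sin\lambda x,\ \cos\lambda x)^{\top}$. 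For $x\in(a,\pi)$ I would use the variation-of-parameters (Duhamel) formula to write $y(x)$ as the unperturbed solution plus an integral term $-\int_a^x \exp\!\bigl(-\lambda B(x-t)\bigr)B^{-1}Q(t)y(t-a)\,dt$; because $a\ge\pi/2$ the shifted argument $t-a$ lies in $(0,x-a)\subset(0,\pi/2)\subset(0,a)$ whenever $t<2a$, and in fact for the whole range $t\in(a,\pi)$ we have $t-a\in(0,\pi-a)\subseteq(0,a)$, so $y(t-a)$ is always the \emph{known} unperturbed solution. This is precisely the linearity in $Q$ alluded to in the paper, and it means no Volterra iteration is needed.

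Next I would extract the characteristic function. Plugging the explicit $y(x)$ into the second boundary condition $y_j(\pi)=0$ gives $\Delta_j(\lambda):=y_j(\pi;\lambda)=0$. The unperturbed part contributes $\cos\lambda\pi$ when $j=1$ and $-\sin\lambda\pi$ (or $\sin\lambda\pi$, up to sign) when $j=2$, while the integral term contributes something of the form $\int_a^\pi \bigl(\text{trig functions of }\lambda\bigr)\,q_{\bullet}(t)\,dt$, i.e. a combination of $\int K(t)e^{\pm i\lambda(\cdots)}\,dt$ with $L_2$ kernels. By the Riemann–Lebesgue lemma these integral terms tend to zero along the real axis, and more precisely, after writing them via $e^{i\lambda(2t-a-\pi)}$-type exponentials with $2t-a-\pi$ ranging over an interval of length $2(\pi-a)\le\pi$, a Paley–Wiener / Bessel-inequality argument shows the correction is an entire function of exponential type $\le\pi$ whose restriction to horizontal lines is $o(1)\cdot e^{\pi|\operatorname{Im}\lambda|}$ plus an $L_2$-on-the-line remainder. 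Thus $\Delta_1(\lambda)=\cos\lambda\pi + (\text{lower order})$ and $\Delta_2(\lambda)=\mp\sin\lambda\pi + (\text{lower order})$; these are sine-type functions.

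Then the asymptotics of the zeros follow from a standard Rouché-type counting argument: the entire function $\Delta_j$ differs from $\cos\lambda\pi$ (resp. $\sin\lambda\pi$), whose zeros are exactly $n+\tfrac{1-j}{2}$, $n\in\mathbb Z$, by a term that is uniformly small on the boundaries of the squares (or circles) of radius $1/4$ around those zeros for $|n|$ large; hence each such contour encloses exactly one zero $\lambda_{n,j}$, giving $\lambda_{n,j}=n+\tfrac{1-j}{2}+\varkappa_{n,j}$ with $\varkappa_{n,j}\to 0$. To upgrade $\varkappa_{n,j}\to 0$ to $\{\varkappa_{n,j}\}\in\ell_2$, I would substitute $\lambda=\lambda_{n,j}$ back into $\Delta_j(\lambda)=0$, Taylor-expand $\cos\lambda\pi$ (resp. $\sin\lambda\pi$) about $n+\tfrac{1-j}{2}$ to isolate $\varkappa_{n,j}$ as (a constant times) the value at $\lambda_{n,j}$ of the small integral term, and then invoke the fact that the sequence of values $\{\int_a^\pi q_\bullet(t)e^{i(n+\cdots)(2t-a-\pi)}\,dt\}_n$ of such Fourier-type coefficients of an $L_2$ function is itself in $\ell_2$ (Bessel's inequality), with the negligible effect of replacing $n+\tfrac{1-j}{2}$ by $\lambda_{n,j}$ controlled using $\varkappa_{n,j}=o(1)$.

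The main obstacle I anticipate is the bookkeeping in the second step: getting the integral correction into a clean $\sum e^{i\lambda c}\,(\text{coefficient})$ form with all exponents $c$ confined to $[-\,2(\pi-a),\,2(\pi-a)]\subseteq[-\pi,\pi]$ (using $a\ge\pi/2$ crucially here), so that the correction genuinely has exponential type strictly controlled and its on-the-line part is square-integrable — only then do both the Rouché count and the $\ell_2$-membership of $\{\varkappa_{n,j}\}$ go through cleanly. The boundary condition at $0$ being the \emph{common} one ($y_1(0)=0$ in both cases $j=1,2$) is what lets the same left-hand solution feed both characteristic functions, so the two asymptotic formulas are proved in one stroke; the only difference between $j=1$ and $j=2$ is which component is evaluated at $\pi$, shifting the unperturbed zeros by $\tfrac12$.
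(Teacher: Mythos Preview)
Your overall approach is essentially the paper's: it too builds the fundamental solution via the variation-of-parameters identity (formulae (2.1)--(2.3)), uses $a\ge\pi/2$ to terminate the iteration after one step, extracts the characteristic functions in the form \emph{leading trigonometric term plus an $L_2$ Fourier integral over $(a-\pi,\pi-a)$} (formulae (\ref{2.10})--(\ref{2.11})), and then invokes the standard Rouch\'e/Bessel argument (Lemma~\ref{lem1}) to get (\ref{1.3}).

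There is, however, a concrete bookkeeping slip you should fix. Your unperturbed solution $(\sin\lambda x,\cos\lambda x)^\top$ does \emph{not} satisfy $By'=\lambda y$ (it satisfies $By'=-\lambda y$); the correct column with $y_1(0)=0$ is $(-\sin\lambda x,\cos\lambda x)^\top$, cf.~(\ref{2.0}). As a consequence you have the leading terms swapped: in the paper $\Delta_1(\lambda)=-\sin\pi\lambda+\ldots$ (zeros near integers, i.e.\ $j=1$) and $\Delta_2(\lambda)=\cos\pi\lambda+\ldots$ (zeros near half-integers, i.e.\ $j=2$), whereas you assign $\cos\pi\lambda$ to $j=1$ and $\sin\pi\lambda$ to $j=2$. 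This makes your sentence ``whose zeros are exactly $n+\tfrac{1-j}{2}$'' internally inconsistent with the main terms you wrote down. Once this swap is corrected, the Rouch\'e and $\ell_2$ steps go through exactly as you describe; also note that the exponents $c=2t-a-\pi$ actually range over $[a-\pi,\pi-a]$, not $[-2(\pi-a),2(\pi-a)]$, though your weaker containment in $[-\pi,\pi]$ is still sufficient.
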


Consider first the following inverse problem.

\medskip
{\bf Inverse Problem 1.} Given the spectra $\{\lambda_{n,j}\}_{n\in\mathbb Z},\,j=1,2,$ find the functions $p(x)$ and $q(x).$

\medskip
As will be seen below, Inverse Problem 1 is overdetermined (as well as the analogous problem for the Sturm--Liouville operator with delay
\cite{ButYur19}). Specifically, the potentials $p(x)$ and $q(x)$ can be uniquely determined by appropriate subspectra. Therefore, we consider
also another statement of the inverse problem. For this purpose, we fix increasing sequences $\{n_{k,j}\}_{k\in{\mathbb Z}},$ $j=1,2,$ of
integer numbers.

\medskip
{\bf Inverse Problem 2.} Given the subspectra $\{\lambda_{n_{k,j},j}\}_{k\in\mathbb Z},\,j=1,2,$ find the functions $p(x)$ and $q(x).$

\medskip
The following theorem gives a necessary and sufficient condition for the uniqueness of its solution.

\begin{theorem}\label{th2}
The solution of Inverse Problem~2 is unique if and only if each of the systems $\{\exp(in_{k,j}x)\}_{k\in{\mathbb Z}},$ $j=1,2,$ is complete
in $L_2(a-\pi,\pi-a).$
\end{theorem}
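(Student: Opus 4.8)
\emph{Strategy.} The plan is to convert the uniqueness of Inverse Problem~2 into a completeness statement for exponential systems through the linear representation of the characteristic functions, and then to bridge from the eigenvalue–frequencies $\lambda_{n_{k,j},j}$ to the integers $n_{k,j}$. First I would write down the explicit form of the characteristic functions $\Delta_j(\lambda)$ of $\,{\cal B}_{1,j}(Q)$, obtained by variation of parameters using that $Q\equiv0$ on $(0,a)$ and $a\ge\pi/2$ (so the delayed argument always falls into $(0,a)$); the zeros of $\Delta_j$ are exactly the eigenvalues $\lambda_{n,j}$, and
\[
\Delta_1(\lambda)=-\sin\lambda\pi+\int_{a-\pi}^{\pi-a}w_1(\tau)e^{i\lambda\tau}\,d\tau,\qquad \Delta_2(\lambda)=\cos\lambda\pi+\int_{a-\pi}^{\pi-a}w_2(\tau)e^{i\lambda\tau}\,d\tau,
\]
with $w_j\in L_2(a-\pi,\pi-a)$. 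The key structural facts (all from the elementary analysis of Section~2) are: the "free" terms do not depend on $Q$, and the map $(p,q)\mapsto(w_1,w_2)$ is a linear bijection onto $L_2(a-\pi,\pi-a)\times L_2(a-\pi,\pi-a)$; consequently recovering $(p,q)$ is equivalent to recovering $(w_1,w_2)$, and the spectrum of $\,{\cal B}_{1,j}$ carries information only about $w_j$.

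\emph{Sufficiency.} Suppose both systems $\{e^{in_{k,j}x}\}_{k\in\mathbb Z}$, $j=1,2$, are complete in $L_2(a-\pi,\pi-a)$, and let two potentials $(p,q)$, $(\tilde p,\tilde q)$ share the subspectra. Since the free terms cancel, $G_j(\lambda):=\Delta_j(\lambda)-\tilde\Delta_j(\lambda)=\int_{a-\pi}^{\pi-a}(w_j-\tilde w_j)(\tau)e^{i\lambda\tau}\,d\tau$ is an entire function of exponential type at most $\pi-a$ whose restriction to $\mathbb R$ lies in $L_2(\mathbb R)$, and it vanishes at every $\lambda_{n_{k,j},j}$, since that point is a common zero of $\Delta_j$ and $\tilde\Delta_j$. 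Recall the standard Paley--Wiener reformulation of completeness: $\{e^{i\mu_kx}\}_k$ is complete in $L_2(a-\pi,\pi-a)$ if and only if the only entire function of exponential type at most $\pi-a$ that is square-integrable on $\mathbb R$ and vanishes at all $\mu_k$ is identically zero. Hence, provided $\{e^{i\lambda_{n_{k,j},j}x}\}_k$ is complete in $L_2(a-\pi,\pi-a)$, we get $G_j\equiv0$, i.e. $w_j=\tilde w_j$; carrying this out for $j=1,2$ and using the bijection $(p,q)\leftrightarrow(w_1,w_2)$ yields $(p,q)=(\tilde p,\tilde q)$.

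\emph{The bridge — main obstacle.} What remains is to show that, for each $j$, the system $\{e^{i\lambda_{n_{k,j},j}x}\}_k$ is complete in $L_2(a-\pi,\pi-a)$ if and only if $\{e^{in_{k,j}x}\}_k$ is. By Theorem~\ref{th1}, $\lambda_{n_{k,j},j}=n_{k,j}+\frac{1-j}2+\varkappa_{n_{k,j},j}$ with $\{\varkappa_{n_{k,j},j}\}_k\in l_2$. The constant summand $\frac{1-j}2$ is harmless: multiplication by the unimodular factor $e^{i(1-j)x/2}$ is a unitary operator of $L_2(a-\pi,\pi-a)$ carrying one exponential system to the other, so they are simultaneously complete. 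The essential, and I expect technically hardest, point is that an $l_2$-perturbation of the exponents leaves completeness of an exponential system in $L_2(a-\pi,\pi-a)$ unchanged; here $a\ge\pi/2$ re-enters through the bound $2(\pi-a)\le\pi$ on the length of the interval, and the asymptotic separation of $\{\lambda_{n,j}\}$ (a consequence of $\varkappa_{n,j}\to0$, itself contained in Theorem~\ref{th1}) is what makes the perturbation admissible. I would isolate this stability of completeness as a separate lemma — it is of the same nature as the corresponding step for the Sturm--Liouville problem with delay in \cite{ButYur19} and rests on classical perturbation theory for exponential systems — and then apply it in both directions.

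\emph{Necessity.} Conversely, suppose $\{e^{in_{k,j_0}x}\}_k$ is not complete in $L_2(a-\pi,\pi-a)$ for some $j_0$. Multiplying by $e^{i(1-j_0)x/2}$, I obtain a nonzero $w^{\ast}\in L_2(a-\pi,\pi-a)$ with $\int_{a-\pi}^{\pi-a}w^{\ast}(\tau)e^{i(n_{k,j_0}+\frac{1-j_0}2)\tau}\,d\tau=0$ for all $k$. For small $\varepsilon\ne0$ let $(p,q)$ be the potential corresponding, under the bijection above, to $w_{j_0}=\varepsilon w^{\ast}$ and $w_j=0$ for $j\ne j_0$. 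Then $\Delta_{j_0}$ equals its $Q$-independent free part (whose zeros are exactly the points $m+\frac{1-j_0}2$, $m\in\mathbb Z$) plus $\varepsilon\int w^{\ast}e^{i\lambda\tau}\,d\tau$, and hence still vanishes at every $n_{k,j_0}+\frac{1-j_0}2$; for $\varepsilon$ small enough, Rouch\'e's theorem shows that the $n_{k,j_0}$-th eigenvalue of $\,{\cal B}_{1,j_0}(Q)$ is the unique zero of $\Delta_{j_0}$ near that point and therefore equals it. Thus the subspectrum $\{\lambda_{n_{k,j_0},j_0}\}_k$ coincides with that of the zero potential, while the subspectrum for $j\ne j_0$ is unchanged because $\Delta_j$ is unchanged. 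So $Q$ and $Q\equiv0$ produce the same subspectra although $Q\ne0$ (as $w_{j_0}\ne0$), and the solution of Inverse Problem~2 is not unique.
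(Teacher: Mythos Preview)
Your approach is essentially the paper's: the representations (2.10)--(2.11), the linear bijection $(p,q)\leftrightarrow(w_1,w_2)$ given by (4.3)--(4.4), and the reduction of uniqueness to completeness of the perturbed exponential system are carried out exactly as in the text, and what you call the ``bridge'' is precisely the paper's Lemma~5, proved by a Paley--Wiener/infinite-product argument of the type you anticipate. Two differences are worth noting. First, the paper states Lemma~5 for the generalized system $\{x^\nu e^{iz_kx}\}$ so as to absorb multiple eigenvalues cleanly; your formulation in terms of plain exponentials $\{e^{i\lambda_{n_{k,j},j}x}\}$ leaves this implicit and would need the obvious amendment. Second, for the necessity the paper argues more directly: starting from an arbitrary $Q$, incompleteness of $\{e_{n,j}(x)\}$ yields a distinct $\tilde w_j$ satisfying the same moment relations (4.6), and the corresponding $\tilde Q$ then shares the subspectra. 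Your Rouch\'e construction against the zero potential is a legitimate alternative and has the merit of making the eigenvalue indexing unambiguous, at the cost of the extra smallness parameter~$\varepsilon$.
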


The next theorem provides sufficient conditions for the solvability of Inverse Problem~2.

\begin{theorem}\label{th3}
Let $j\in\{1,2\},$ and let the system $\{\exp(in_{k,j}x)\}_{k\in{\mathbb Z}}$ be a Riesz basis in $L_2(a-\pi,\pi-a).$ Then for any complex
sequence $\{\mu_{k,j}\}_{k\in\mathbb Z}$ to be a subspectrum of the problem ${\cal B}_{1,j}(Q),$ it is sufficient to have the asymptotics
\begin{equation}\label{1.6}
\mu_{k,j}=n_{k,j}+\frac{1-j}2+\varkappa_{k,j}, \quad \{\varkappa_{k,j}\}\in l_2.
\end{equation}
\end{theorem}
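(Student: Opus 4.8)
The plan is to exploit the \emph{linearity} of the characteristic function $\Delta_j(\lambda)$ of ${\cal B}_{1,j}(Q)$ with respect to the potential, which holds precisely because $a\ge\pi/2$, and to reduce the assertion to a moment problem on $L_2(a-\pi,\pi-a)$ solvable via the Riesz-basis hypothesis. First I would construct, by the method of steps, the solution $y(x,\lambda)$ of (\ref{1.1}) with $y_1(0,\lambda)=0$: on $(0,a)$ it equals the explicit free solution (with components $\sin\lambda x$ and $-\cos\lambda x$), and on $(a,\pi)$ the shifted argument satisfies $x-a\in(0,\pi-a)\subseteq(0,a)$ --- the inclusion being exactly the condition $a\ge\pi/2$ --- so that $Q(x)y(x-a)$ is a known vector function and variation of parameters against the rotation matrix $\exp(-\lambda(x-t)B)$ gives $\Delta_j(\lambda)=y_j(\pi,\lambda)$ in the form
$$\Delta_j(\lambda)=\Delta_j^0(\lambda)+\frac12\int_{a-\pi}^{\pi-a}w_j(s)\,e^{i\lambda s}\,ds,$$
with $\Delta_1^0(\lambda)=\sin\pi\lambda$, $\Delta_2^0(\lambda)=-\cos\pi\lambda$ (matching the leading terms in Theorem~\ref{th1}), and $w_j\in L_2(a-\pi,\pi-a)$ an explicit linear image of $(p,q)$. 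A short computation --- the substitution $s=\pi+a-2t$ together with the even/odd splitting of $p$ and $q$ over the symmetric interval $(a-\pi,\pi-a)$ --- shows that, for each fixed $j$, the map $(p,q)\mapsto w_j$ is onto $L_2(a-\pi,\pi-a)$ (having both $p$ and $q$ at one's disposal is exactly what is needed). Hence it suffices to produce $h\in L_2(a-\pi,\pi-a)$ with $\Delta_j(\mu_{k,j})=0$ for all $k$, i.e.\ to solve the moment problem $\int_{a-\pi}^{\pi-a}h(s)e^{i\mu_{k,j}s}\,ds=-2\Delta_j^0(\mu_{k,j})=:c_k$, and then to recover $(p,q)$, hence $Q$, from $h=w_j$.

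Next I would verify admissibility of the data and solve the moment problem. Substituting (\ref{1.6}) into the explicit form of $\Delta_j^0$ yields $\Delta_j^0(\mu_{k,j})=\pm(-1)^{n_{k,j}}\sin\pi\varkappa_{k,j}$, whence $|c_k|\le C|\varkappa_{k,j}|$ and $\{c_k\}\in l_2$. Now the hypothesis enters: since $\{e^{in_{k,j}s}\}_{k\in{\mathbb Z}}$ is a Riesz basis of $L_2(a-\pi,\pi-a)$, so is the conjugate system $\{e^{-in_{k,j}s}\}$; and from
$$\sum_{k\in{\mathbb Z}}\bigl\|e^{-i\overline{\mu_{k,j}}s}-e^{-in_{k,j}s}\bigr\|^2_{L_2(a-\pi,\pi-a)}=\sum_{k\in{\mathbb Z}}\bigl\|e^{-in_{k,j}s}\bigl(e^{-i\overline{\varkappa_{k,j}}s}-1\bigr)\bigr\|^2\le C\sum_{k\in{\mathbb Z}}|\varkappa_{k,j}|^2<\infty$$
the system $\{e^{-i\overline{\mu_{k,j}}s}\}_{k\in{\mathbb Z}}$ is an $l_2$-perturbation of a Riesz basis and is therefore itself a Riesz basis of $L_2(a-\pi,\pi-a)$ (the $\mu_{k,j}$ being distinct --- which may be assumed --- and asymptotically separated, since the $n_{k,j}$ are distinct integers and $\varkappa_{k,j}\to0$). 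Consequently the analysis operator $g\mapsto(\langle g,e^{-i\overline{\mu_{k,j}}s}\rangle)_k$ is a bounded bijection of $L_2(a-\pi,\pi-a)$ onto $l_2$, so $\langle h,e^{-i\overline{\mu_{k,j}}s}\rangle=c_k$ has a solution $h\in L_2(a-\pi,\pi-a)$ for our $l_2$ data; equivalently, $\{\mu_{k,j}\}$ is a complete interpolating sequence for the Paley--Wiener space $PW_{\pi-a}$, which produces the required entire correction $G(\lambda)=\frac12\int_{a-\pi}^{\pi-a}h(s)e^{i\lambda s}\,ds$. Recovering $(p,q)$ from $h=w_j$ as in the first step gives a potential $Q$ of the form (\ref{1.4}) with $Q\equiv0$ on $(0,a)$ for which $\Delta_j(\mu_{k,j})=0$; since the zeros of $\Delta_j$ are exactly the eigenvalues of ${\cal B}_{1,j}(Q)$ (as used in the proof of Theorem~\ref{th1}), the sequence $\{\mu_{k,j}\}$ is a subspectrum of ${\cal B}_{1,j}(Q)$, as claimed.

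The main obstacle is the step asserting that an $l_2$-perturbation of an exponential Riesz basis is again a Riesz basis. An $l_2$-bound makes the perturbation Hilbert--Schmidt, hence compact, so a priori one obtains only a Fredholm operator of index zero and must still exclude a kernel; this is where the separation of $\{\mu_{k,j}\}$ is used --- automatic for large $|k|$ and arrangeable for the remaining finitely many indices --- together with the known stability theory for Riesz bases of exponentials. The remaining points are routine: the bookkeeping behind the surjectivity of $(p,q)\mapsto w_j$ (the even/odd decomposition and the symmetry of $(a-\pi,\pi-a)$), the $l_2$-membership of $\{c_k\}$, and the identification of the zeros of $\Delta_j$ with the spectrum of ${\cal B}_{1,j}(Q)$, all of which follow from the representation derived at the outset.
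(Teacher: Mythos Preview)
Your approach is essentially the paper's: derive the representation $\Delta_j(\lambda)=\Delta_j^0(\lambda)+\int_{a-\pi}^{\pi-a}w_j(x)e^{i\lambda x}\,dx$, reduce the sufficiency to a moment problem with $l_2$ data, solve it via the Riesz-basis property of the perturbed exponentials, and invert the linear map $(p,q)\mapsto w_j$. The paper packages the perturbation step into its Lemma~5, whose proof combines quadratic closeness (your Hilbert--Schmidt observation) with a separate completeness argument to exclude a kernel; your appeal to ``known stability theory'' amounts to the same thing.

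There is, however, one genuine gap. You write that the $\mu_{k,j}$ ``may be assumed'' distinct, but they are \emph{given} data satisfying only (\ref{1.6}); finitely many of them can coincide, and in that case $\{e^{-i\overline{\mu_{k,j}}s}\}$ has repeated elements and cannot be a Riesz basis, so your moment problem as stated is ill-posed. Being a subspectrum then requires $\Delta_j$ to vanish to the correct order at each repeated value, i.e.\ the moment conditions must include derivatives. The paper handles this by replacing the exponential system with the generalized system $e_{k+\nu,j}(x)=(ix)^\nu e^{i\mu_{k,j}x}$, $\nu=0,\ldots,m_{k,j}-1$, and proving (Lemma~5) that this system is a Riesz basis in $L_2(a-\pi,\pi-a)$ if and only if $\{e^{in_{k,j}x}\}$ is. Your ``arrangeable for the remaining finitely many indices'' does not address this; you need either that argument or a limiting procedure with uniform bounds. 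A minor slip: for $j=2$ you compare $e^{-i\overline{\mu_{k,2}}s}$ with $e^{-in_{k,2}s}$, forgetting the $-\tfrac12$ shift in (\ref{1.6}); compare instead with $e^{-i(n_{k,2}-1/2)s}$, which is still a Riesz basis since multiplication by $e^{is/2}$ is unitary.
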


When the required Riesz-basisness takes place automatically, Theorem~\ref{th3} gives a {\it necessary} and sufficient condition for the
solvability of Inverse Problem~2 in terms of sole asymptotics. For example, let us refer to $\{\lambda_{mk,j}\}_{k\in\mathbb Z}$ with some
natural $m$ as {\it $m$-th subspectrum} of the problem ${\cal B}_{1,j}(Q).$ Then the following theorem holds.

\begin{theorem}\label{th4}
Let $j\in\{1,2\}$ and $a=\pi-\pi/m,$ $m-1\in{\mathbb N}.$ Then for any complex sequence $\{\mu_{k,j}\}_{k\in\mathbb Z}$ to be an $m$-th
subspectrum of ${\cal B}_{1,j}(Q),$ it is necessary and sufficient to have the asymptotics
$$
\mu_{k,j}=mk+\frac{1-j}2+\varkappa_{k,j}, \quad \{\varkappa_{k,j}\}\in l_2.
$$
\end{theorem}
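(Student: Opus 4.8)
The plan is to derive Theorem~\ref{th4} as a specialization of Theorem~\ref{th3} together with Theorem~\ref{th1}, the only genuinely new ingredient being that for $a=\pi-\pi/m$ the relevant exponential system becomes a Riesz basis automatically, so that the asymptotics in~(\ref{1.6}) are not merely sufficient but also necessary. First I would observe that $a=\pi-\pi/m$ gives $a-\pi=-\pi/m$ and $\pi-a=\pi/m$, so the interval $L_2(a-\pi,\pi-a)$ has length $2\pi/m$. The system in question is $\{\exp(imkx)\}_{k\in\mathbb Z}$, i.e.\ the exponentials indexed by $n_{k,j}=mk$. A change of variable $t=mx$ maps $L_2(-\pi/m,\pi/m)$ isometrically (up to the constant $\sqrt m$) onto $L_2(-\pi,\pi)$ and carries $\exp(imkx)$ to $\exp(ikt)$; since $\{\exp(ikt)\}_{k\in\mathbb Z}$ is the standard orthonormal basis of $L_2(-\pi,\pi)$, the system $\{\exp(imkx)\}_{k\in\mathbb Z}$ is an orthogonal basis — in particular a Riesz basis — of $L_2(-\pi/m,\pi/m)$. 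Thus the hypothesis of Theorem~\ref{th3} holds unconditionally for this choice of $\{n_{k,j}\}$, which yields sufficiency of the stated asymptotics: any sequence $\mu_{k,j}=mk+\tfrac{1-j}2+\varkappa_{k,j}$ with $\{\varkappa_{k,j}\}\in l_2$ is realized as the $m$-th subspectrum of some problem ${\cal B}_{1,j}(Q)$.

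For necessity, I would argue in the reverse direction: if $\{\mu_{k,j}\}_{k\in\mathbb Z}=\{\lambda_{mk,j}\}_{k\in\mathbb Z}$ is the $m$-th subspectrum of an actual problem ${\cal B}_{1,j}(Q)$ with $Q$ satisfying~(\ref{1.4}), then by Theorem~\ref{th1} the full spectrum satisfies $\lambda_{n,j}=n+\tfrac{1-j}2+\varkappa_{n,j}$ with $\{\varkappa_{n,j}\}_{n\in\mathbb Z}\in l_2$. Restricting to the subsequence $n=mk$ and setting $\varkappa_{k,j}:=\varkappa_{mk,j}$, one gets $\mu_{k,j}=mk+\tfrac{1-j}2+\varkappa_{k,j}$, and $\{\varkappa_{mk,j}\}_{k\in\mathbb Z}\in l_2$ because it is a subsequence of an $l_2$ sequence. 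This establishes the asymptotics as a necessary condition. Combining the two directions gives the "if and only if" of Theorem~\ref{th4}.

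The step I expect to require the most care is making the reduction to Theorem~\ref{th3} fully rigorous — specifically, confirming that the indices $n_{k,j}=mk$ are an \emph{increasing} integer sequence as required in the setup preceding Inverse Problem~2 (they are, since $m\ge 2$ as $m-1\in\mathbb N$), and that Theorem~\ref{th3} is being applied with exactly the interval $L_2(a-\pi,\pi-a)=L_2(-\pi/m,\pi/m)$ that appears after substituting $a=\pi-\pi/m$. One should also double-check the edge constant: whether $m-1\in\mathbb N$ is intended to allow $m=1$ (which would force $a=0$, excluded by $a\ge\pi/2$) or starts at $m=2$; in the latter case $a=\pi-\pi/m\in[\pi/2,\pi)$ automatically, consistent with the standing assumption $a\ge\pi/2$, so no extra hypothesis is needed. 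Beyond these bookkeeping points the proof is essentially immediate, since all the analytic work — the linear dependence of the characteristic functions on $Q$, the construction realizing prescribed subspectra, and the spectral asymptotics — has already been carried out in Theorems~\ref{th1} and~\ref{th3}.
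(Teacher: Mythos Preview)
Your proposal is correct and follows essentially the same route as the paper: necessity is obtained from Theorem~\ref{th1} (the asymptotics~(\ref{1.3})) by restriction to the subsequence $n=mk$, and sufficiency by observing that $\{\exp(imkx)\}_{k\in\mathbb Z}$ is an orthogonal (hence Riesz) basis in $L_2(-\pi/m,\pi/m)$ and then invoking Theorem~\ref{th3}. The extra bookkeeping you mention (monotonicity of $n_{k,j}=mk$, the identification $L_2(a-\pi,\pi-a)=L_2(-\pi/m,\pi/m)$, and the reading of $m-1\in\mathbb N$ as $m\ge2$) is accurate and does not alter the argument.
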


According to Theorem~\ref{th2}, for the unique solvability of the inverse problem, it is always sufficient to specify the so-called even
subspectra or, in our present terminology, $2$-nd subspectra $\{\lambda_{2k,1}\}_{k\in\mathbb Z}$ and $\{\lambda_{2k,2}\}_{k\in\mathbb Z}.$
Hence, Inverse Problem~1 is overdetermined. However, in spite of the non-minimality of its input data, one can formulate necessary and
sufficient conditions for its solvability analogously to how it was made for a scalar pencil with two delays in \cite{BMSh21}. Specifically,
the following theorem holds.

\begin{theorem}\label{th5}
For any sequences of complex numbers $\{\lambda_{n,1}\}_{n\in{\mathbb Z}}$ and $\{\lambda_{n,2}\}_{n\in{\mathbb Z}}$ to be the spectra of
some boundary value problems ${\cal B}_{1,1}(Q)$ and ${\cal B}_{1,2}(Q),$ respectively, it is necessary and sufficient to satisfy the
following two conditions:

(i) For $j=1,2,$ the sequence $\{\lambda_{n,j}\}_{n\in{\mathbb Z}}$ has the form~(\ref{1.3});

(ii) The exponential types of the functions $\Delta_1(\lambda)+\sin\lambda\pi$ and $\Delta_2(\lambda)-\cos\lambda\pi$ do not exceed $\pi-a,$
where the functions $\Delta_j(\lambda)$ are determined by the formulae
\begin{equation}\label{1.8}
\Delta_1(\lambda)=\pi(\lambda_{0,1}-\lambda)\prod_{|n|\in{\mathbb N}}\frac{\lambda_{n,1}-\lambda}n \exp\Big(\frac\lambda{n}\Big), \quad
\Delta_2(\lambda)=\prod_{n\in{\mathbb Z}}\frac{\lambda_{n,2}-\lambda}{n-1/2} \exp\Big(\frac\lambda{n-1/2}\Big).
\end{equation}
\end{theorem}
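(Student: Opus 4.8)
The plan is to derive an explicit representation of the characteristic functions of the problems ${\cal B}_{1,j}(Q)$ and then read off both the necessity and sufficiency from it. First I would use the linearity of the dependence on $Q(x)$ in the regime $a\ge\pi/2$ (established in Section~2) to write the characteristic function of ${\cal B}_{1,1}(Q)$ as $\Delta_1(\lambda)=-\sin\lambda\pi+\int_{a-\pi}^{\pi-a} f_1(t)\exp(i\lambda t)\,dt$ for some $f_1\in L_2(a-\pi,\pi-a)$ determined by $p$ and $q$, and similarly $\Delta_2(\lambda)=\cos\lambda\pi+\int_{a-\pi}^{\pi-a} f_2(t)\exp(i\lambda t)\,dt$; these formulae should already be available from the analysis behind Theorem~\ref{th1}. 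In particular $\Delta_1(\lambda)+\sin\lambda\pi$ and $\Delta_2(\lambda)-\cos\lambda\pi$ are entire functions of exponential type at most $\pi-a$, which is exactly condition~(ii), while Theorem~\ref{th1} gives the zero asymptotics~(\ref{1.3}), which is condition~(i). This establishes necessity.

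For sufficiency, I would start from sequences $\{\lambda_{n,j}\}$ satisfying (i) and (ii) and reconstruct the potentials. Using~(i), the infinite products in~(\ref{1.8}) converge and define entire functions $\Delta_j(\lambda)$ whose zero sets are precisely the prescribed sequences; a standard Hadamard-factorization argument (comparing with $\sin\lambda\pi$ and $\cos\lambda\pi$, whose zeros are $\mathbb Z$ and $\mathbb Z-1/2$) shows that $\Delta_1(\lambda)$ has the sine-type asymptotics $-\sin\lambda\pi+o(e^{\pi|\mathrm{Im}\,\lambda|})$ and $\Delta_2(\lambda)$ the cosine-type asymptotics $\cos\lambda\pi+o(e^{\pi|\mathrm{Im}\,\lambda|})$, with the $l_2$-refinement coming from $\{\varkappa_{n,j}\}\in l_2$. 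Then condition~(ii) guarantees that $g_j(\lambda):=\Delta_1(\lambda)+\sin\lambda\pi$ (resp.\ $\Delta_2(\lambda)-\cos\lambda\pi$) is entire of exponential type $\le\pi-a$, and the asymptotics force it to be square-integrable on the real line; by the Paley--Wiener theorem $g_j(\lambda)=\int_{a-\pi}^{\pi-a} f_j(t)\exp(i\lambda t)\,dt$ for some $f_j\in L_2(a-\pi,\pi-a)$. The final step is to invert the map $(p,q)\mapsto(f_1,f_2)$: the explicit form of $f_1,f_2$ in terms of $p,q$ (again from Section~2) is a triangular/Volterra-type relation on $(a-\pi,\pi-a)$, and solving it — or its equivalent system of integral equations — recovers $p(x)$ and $q(x)$ on $(a,\pi)$ with $Q\equiv0$ on $(0,a)$, so that the constructed $Q$ indeed has ${\cal B}_{1,j}(Q)$ with the prescribed spectra.

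The main obstacle I anticipate is the last step: showing that the system relating $(f_1,f_2)$ to $(p,q)$ is actually \emph{solvable} for arbitrary $f_j\in L_2(a-\pi,\pi-a)$, i.e.\ that every admissible pair of exponential-type perturbations arises from genuine $L_2$-potentials. This requires carefully tracking how $p$ and $q$ enter the characteristic functions — in particular disentangling the contributions of $q_{12}+q_{21}=2p$ and $q_{11}-q_{22}=2q$ under assumption~(\ref{1.4}) — and verifying that the resulting integral equations (likely of Volterra type in a shifted variable) have a unique $L_2$-solution, possibly after splitting the interval at the point $0$ where the structure of the delay kernel changes. The convergence and asymptotic analysis of the canonical products in~(\ref{1.8}), while technical, is routine given~(i); and the Paley--Wiener identification of $g_j$ is standard. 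I would also double-check the normalization constants (the factor $\pi(\lambda_{0,1}-\lambda)$ in $\Delta_1$ versus the symmetric product for $\Delta_2$) against the known values $\Delta_1(\lambda)\sim-\sin\lambda\pi$, $\Delta_2(\lambda)\sim\cos\lambda\pi$ as $\lambda\to\infty$ along the imaginary axis, to make sure the products reproduce exactly these leading terms and not a nonzero multiple of them.
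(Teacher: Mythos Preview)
Your approach is essentially the same as the paper's: necessity via the integral representations (\ref{2.10})--(\ref{2.11}) together with Theorem~\ref{th1}, and sufficiency by building $\Delta_j$ from the products (\ref{1.8}), using Paley--Wiener to localize the Fourier density to $(a-\pi,\pi-a)$, and then inverting the map $(p,q)\mapsto(f_1,f_2)$.

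The one place where you misjudge the situation is the final step, which you flag as the ``main obstacle'' and describe as a triangular/Volterra-type integral equation requiring a splitting at $0$. In fact the map is purely algebraic. From (\ref{2.12})--(\ref{2.13}) one has, for $x\in(a-\pi,\pi-a)$,
\[
4f_1(x)=-(q+ip)\Big(\tfrac{\pi+a-x}2\Big)-(q-ip)\Big(\tfrac{\pi+a+x}2\Big),\qquad
4if_2(x)=-(q+ip)\Big(\tfrac{\pi+a-x}2\Big)+(q-ip)\Big(\tfrac{\pi+a+x}2\Big),
\]
so adding and subtracting gives $(q\pm ip)$ at the two shifted arguments separately, and a change of variable yields the explicit pointwise inverse
\[
q(x)=-(f_1+if_2)(\pi+a-2x)-(f_1-if_2)(2x-\pi-a),\qquad
p(x)=(if_1-f_2)(\pi+a-2x)-(if_1+f_2)(2x-\pi-a),
\]
valid for $x\in(a,\pi)$. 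Thus every pair $(f_1,f_2)\in L_2(a-\pi,\pi-a)^2$ arises from a unique pair $(p,q)\in L_2(a,\pi)^2$, with no integral equation to solve. The paper also packages the convergence/asymptotics of the products (\ref{1.8}) as a citation (Lemma~\ref{lem3}, relying on \cite{But22}) rather than redoing the Hadamard analysis, but your sketch of that part is fine.
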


We note that the functions $\Delta_1(\lambda)$ and $\Delta_2(\lambda)$ constructed by the formulae in (\ref{1.8}) are the characteristic
functions of the problems ${\cal B}_{1,1}(Q)$ and ${\cal B}_{1,2}(Q),$ respectively (see the next section and Lemma~\ref{lem2}).

Finally, let us formulate the uniform stability for Inverse Problem~1. For this purpose, along with the problems ${\cal B}_{1,j}(Q),$ we will
consider problems ${\cal B}_{1,j}(\tilde Q)$ of the same forms but with a different potential matrix $\tilde Q(x)$ obeying the condition that
is analogous to condition (\ref{1.4}). As usual, if some symbol $\alpha$ denotes an object related to the problem ${\cal B}_{1,j}(Q),$ then
this symbol with tilde $\tilde\alpha$ will denote the analogous object related to the problem ${\cal B}_{1,j}(\tilde Q).$

\begin{theorem}\label{th6}
For any fixed $r>0,$ there exists $C_r>0$ such that the following estimate holds:
\begin{equation}\label{1.9}
\|q-\tilde q\|_{L_2(a,\pi)}+\|p-\tilde p\|_{L_2(a,\pi)}\le C_r\Big(\|\{\lambda_{n,1}-\tilde\lambda_{n,1}\}_{n\in{\mathbb Z}}\|_{l_2}
+\|\{\lambda_{n,2}-\tilde\lambda_{n,2}\}_{n\in{\mathbb Z}}\|_{l_2}\Big)
\end{equation}
as soon as $\|\{\lambda_{n,j}-n+(j-1)/2\}_{n\in{\mathbb Z}}\|_{l_2}\le r$ and $\|\{\tilde\lambda_{n,j}-n+(j-1)/2\}_{n\in{\mathbb
Z}}\|_{l_2}\le r$ for $j=0,1.$
\end{theorem}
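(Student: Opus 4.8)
The plan is to reduce the uniform stability estimate \eqref{1.9} to the known uniform stability of sine-type functions with asymptotically separated zeros, applied separately to the two characteristic functions $\Delta_1(\lambda)$ and $\Delta_2(\lambda)$. First I would recall from the preceding section (and Lemma~\ref{lem2}) that, under condition \eqref{1.4}, the characteristic functions admit the representations
\begin{equation*}
\Delta_1(\lambda)=-\sin\lambda\pi+\int_{a-\pi}^{\pi-a} w_1(t)e^{i\lambda t}\,dt,\qquad
\Delta_2(\lambda)=\cos\lambda\pi+\int_{a-\pi}^{\pi-a} w_2(t)e^{i\lambda t}\,dt,
\end{equation*}
where $w_1,w_2\in L_2(a-\pi,\pi-a)$ are explicit linear integral transforms of the potentials $q$ and $p$ on $(a,\pi)$, and conversely $q$ and $p$ on $(a,\pi)$ are recovered from $w_1,w_2$ by a bounded (in fact boundedly invertible) linear map. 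Thus it suffices to prove the two-sided bound
$$
\|w_j-\tilde w_j\|_{L_2(a-\pi,\pi-a)}\asymp \|\{\lambda_{n,j}-\tilde\lambda_{n,j}\}\|_{l_2}
$$
uniformly on the ball $\{\|\{\varkappa_{n,j}\}\|_{l_2}\le r\}$, for $j=1,2$.

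Next I would set up the functional-analytic core. Fix $j$. The function $\Delta_j(\lambda)$ is a sine-type function of type $\pi$ whose zero set $\{\lambda_{n,j}\}$ is, by Theorem~\ref{th1}, an $l_2$-perturbation of the arithmetic progression $\{n+(1-j)/2\}$; such sequences are asymptotically separated, and within a fixed $l_2$-ball of perturbations the separation constant and the sine-type bounds are uniform. The recent results on uniform stability of sine-type functions with asymptotically separated zeros (cited in the abstract) then give: the map sending the zero sequence to the function $\Delta_j$, normalized by the leading coefficient fixed in \eqref{1.8}, is uniformly bi-Lipschitz from the $l_2$-ball of zero-perturbations into the space of functions of the form "$\mp\sin\lambda\pi$ (resp. $\cos\lambda\pi$) plus a Paley--Wiener function of type $\le\pi-a$", measured in the $L_2$-norm of the Paley--Wiener density. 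Concretely, $\|\{\lambda_{n,j}-\tilde\lambda_{n,j}\}\|_{l_2}\le C_r\|w_j-\tilde w_j\|_{L_2}$ and the reverse inequality both hold with constants depending only on $r$ and $a$. Combining the two indices $j=1,2$ and composing with the bounded inverse transform $w_j\mapsto (q,p)$ yields \eqref{1.9}; the reverse direction is not needed for the statement but comes for free and confirms sharpness.

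The main obstacle is the step that converts convergence of the zeros into convergence of the Paley--Wiener densities $w_j$ with a constant uniform over the ball. Two technical points must be handled carefully here. First, one must verify that the infinite products in \eqref{1.8} indeed reproduce $\Delta_j$ and, more importantly, that the normalization there (the prescribed leading factor $\pi(\lambda_{0,1}-\lambda)$ for $j=1$ and the canonical genus-one product for $j=2$) is exactly the one under which the cited stability theorem applies without an extra unknown multiplicative constant; this is where the specific arithmetic-progression asymptotics \eqref{1.3}, with the \emph{same} $\pm1/2$ shift for $\Delta_j$ and for the "free" term $\mp\sin\lambda\pi$/$\cos\lambda\pi$, is used. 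Second, one must check that the exceptional finitely many zeros $\lambda_{n,j}$ that may fail the separation condition (only finitely many, with a bound on their number and location uniform over the $r$-ball) contribute only a Lipschitz error: this is done by splitting the product into a finite Blaschke-type factor, handled by elementary estimates, times a tail to which the uniform sine-type stability applies directly. Once these two points are secured, the rest is the routine bookkeeping of assembling the $j=1,2$ estimates and invoking boundedness of the potential-to-$w_j$ correspondence.
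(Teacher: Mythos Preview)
Your proposal is correct and follows essentially the same route as the paper: reduce to an estimate on $\|w_j-\tilde w_j\|_{L_2(a-\pi,\pi-a)}$ via the bounded linear correspondence between $(w_1,w_2)$ and $(q,p)$ (the paper's relations (4.3)--(4.5)), and then invoke the uniform stability of sine-type functions from \cite{But22} to control $w_j-\tilde w_j$ by the $l_2$-norm of the eigenvalue differences. The paper simply packages the latter step as Lemma~\ref{lem4} (a direct corollary of Theorem~7 in \cite{But22}) and applies it as a black box, so the normalization check and the Blaschke-factor splitting you flag as obstacles are already absorbed into that cited result and need not be reworked here; also, only the one-sided estimate is used, not the full bi-Lipschitz equivalence you sketch.
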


We note that the uniform stability of inverse problems for the classical Sturm--Liouville and Dirac operators in the selfadjoint case was
studied in \cite{SavShk10,Hry11-1} and \cite{Hry11-2}, respectively. However, we use a different approach suggested in \cite{But21-1,But21-2}
for other nonlocal operators. An important part of that approach was the proof of the uniform stability for recovering the characteristic
function of the operator under consideration from its zeros, i.e. from the spectrum of this operator. Afterwards, results of this type were
extended to arbitrary sine-type functions with asymptotically separated zeros \cite{But22}, which include characteristic functions of many
concrete operators frequently appearing in applications. We obtain all required assertions for the characteristic functions
$\Delta_1(\lambda)$ and $\Delta_2(\lambda)$ including asymptotics of their zeros (and, thus, Theorem~\ref{th1}) as simple corollaries of the
corresponding general assertions in \cite{But22}.

\medskip
{\bf Remark 1.} In the above-mentioned classical cases \cite{SavShk10,Hry11-1,Hry11-2}, the uniform stability of the inverse problem was
studied only in the selfadjoint case, and it takes place only when there is a non-reducible positive gap between neighboring eigenvalues.
This restriction was explained in \cite{But21-1, But21-2}. It is connected with the fact that spectra of the problems ${\cal B}_{1,1}(Q)$ and
${\cal B}_{1,2}(Q)$ cannot intersect if $a=0.$ Moreover, in the selfadjoint case they interlace. That is why unlimited rapprochement of
neighboring eigenvalues is incompatible with the uniform stability of the inverse problem. However, the picture becomes completely different
for $a>0.$ In particular, Theorem~4 implies that both spectra may possess any finite numbers of common eigenvalues. Moreover, the
corresponding operators are non-selfadjoint and they may have multiple eigenvalues, which also does not affect the uniform stability.

\medskip
The paper is organized as follows. In the next section, we construct and study a fundamental matrix-solution of equation (\ref{1.1}),
introduce the characteristic functions of all problems ${\cal B}_{\nu,j}(Q)$ and prove Theorem~\ref{th1}. In Section~\ref{sec3}, we obtain
auxiliary assertions for related entire functions including their uniform stability. Proofs of Theorems~\ref{th2}--\ref{th6} are given in
Section~4.

\section{Fundamental solution and characteristic functions}\label{sec2}

\medskip
Allowing  $a\in(0,\pi),$ we denote by $Y(x,\lambda)$ the fundamental matrix-solution of equation (\ref{1.1}) such that
$$
 Y(x,\lambda)=\begin{bmatrix} y_{1,1}(x,\lambda) & y_{1,2}(x,\lambda) \\ y_{2,1}(x,\lambda) & y_{2,2}(x,\lambda) \end{bmatrix}, \quad
 Y(0,\lambda)=\begin{bmatrix} 1 & 0 \\ 0 & 1 \end{bmatrix}.
$$
Then, for $\nu,j\in\{1,2\},$ eigenvalues of the problem ${\cal B}_{\nu,j}(Q)$ coincide with zeros of the entire function
\begin{equation}\label{2.00}
\Delta_{\nu,j}(\lambda):=y_{j,3-\nu}(\pi,\lambda),
\end{equation}
which is called {\it characteristic function} of ${\cal B}_{\nu,j}(Q).$

When $Q(x)=0$ a.e. on $(0,\pi),$ we, obviously, have $Y(x,\lambda)=Y_0(x,\lambda),$ where
\begin{equation}\label{2.0}
Y_0(x,\lambda):=\begin{bmatrix} \cos\lambda x & -\sin\lambda x \\ \sin\lambda x & \cos\lambda x \end{bmatrix}.
\end{equation}
By Lagrange's method of variation of parameters, one can establish that the matrix-function $Y(x,\lambda)$ is a solution of the integral
equation
\begin{equation}\label{2.1}
Y(x,\lambda)=Y_0(x,\lambda)+B\int_a^xY_0(x-t,\lambda)Q(t)Y(t-a,\lambda)\,dt,
\end{equation}
which can be easily checked also by direct substitution into (\ref{1.1}). Solving equation (\ref{2.1}) by the method of successive
approximations we arrive at the following representation of $Y(x,\lambda)$ as a finite sum:
\begin{equation}\label{2.2}
Y(x,\lambda)=\sum_{k=0}^NY_k(x,\lambda), \quad Y_k(x,\lambda):=B\int_{ka}^xY_0(x-t,\lambda)Q(t)Y_{k-1}(t-a,\lambda)\,dt, \quad
k=\overline{1,N},
\end{equation}
where $N$ is such that $a\in[\pi/(N+1),\pi/N).$

In particular, in our considered case $a\ge\pi/2,$ formula (\ref{2.2}) takes the form
\begin{equation}\label{2.3}
Y(x,\lambda)=Y_0(x,\lambda)+B\int_a^xY_0(x-t,\lambda)Q(t)Y_0(t-a,\lambda)\,dt.
\end{equation}
Substituting (\ref{2.0}) into (\ref{2.3}) and taking the definition (\ref{2.00}) into account, one can calculate
\begin{equation}\label{2.4}
\Delta_{1,1}(\lambda)=-\sin\pi\lambda +V_1(\lambda) -\int_{a-\pi}^{\pi-a} (p_{11}-p_{22})(x)\cos\lambda x\,dx + \int_{a-\pi}^{\pi-a}
(p_{12}+p_{21})(x)\sin\lambda x\,dx,
\end{equation}
\begin{equation}\label{2.5}
\Delta_{1,2}(\lambda)=\cos\pi\lambda +V_2(\lambda) -\int_{a-\pi}^{\pi-a} (p_{11}-p_{22})(x)\sin\lambda x\,dx - \int_{a-\pi}^{\pi-a}
(p_{12}+p_{21})(x)\cos\lambda x\,dx,
\end{equation}
\begin{equation}\label{2.6}
\Delta_{2,1}(\lambda)=\cos\pi\lambda +V_2(\lambda) +\int_{a-\pi}^{\pi-a} (p_{11}-p_{22})(x)\sin\lambda x\,dx + \int_{a-\pi}^{\pi-a}
(p_{12}+p_{21})(x)\cos\lambda x\,dx,
\end{equation}
\begin{equation}\label{2.7}
\Delta_{2,2}(\lambda)=\sin\pi\lambda -V_1(\lambda) -\int_{a-\pi}^{\pi-a} (p_{11}-p_{22})(x)\cos\lambda x\,dx + \int_{a-\pi}^{\pi-a}
(p_{12}+p_{21})(x)\sin\lambda x\,dx,
\end{equation}
where
\begin{equation}\label{2.8}
V_1(\lambda)=\omega_1\cos\lambda(\pi-a) +\omega_2\sin\lambda(\pi-a), \quad V_2(\lambda)=\omega_1\sin\lambda(\pi-a)
-\omega_2\cos\lambda(\pi-a),
\end{equation}
\begin{equation}\label{2.9}
\omega_1=\frac12\int_a^\pi(q_{11}+q_{22})(x)\,dx, \quad \omega_2=\frac12\int_a^\pi(q_{12}-q_{21})(x)\,dx, \quad p_{\nu j}(x)=\frac14 q_{\nu
j}\Big(\frac{\pi+a-x}2\Big).
\end{equation}

\medskip
{\bf Remark 2.} Representations (\ref{2.4})--(\ref{2.7}) show that the spectrum of any problem ${\cal B}_{\nu,j}(Q)$ may carry information
only on the functions $q_{11}(x)-q_{22}(x),$ $q_{12}(x)+q_{21}(x)$ and only on the mean values of the functions $q_{11}(x)+q_{22}(x)$ and
$q_{12}(x)-q_{21}(x),$ which justifies our assumption (\ref{1.4}).

Moreover, it can be easily seen that, from the informational point of view, it is sufficient to specify the spectra of any pair of problems
with one common boundary condition, e.g., ${\cal B}_{1,1}(Q)$ and ${\cal B}_{1,2}(Q).$

\medskip
From now on, we apply the assumption (\ref{1.4}). Then, according to (\ref{2.4}), (\ref{2.5}) and (\ref{2.8}), (\ref{2.9}), the
characteristic functions $\Delta_1(\lambda):=\Delta_{1,1}(\lambda)$ and $\Delta_2(\lambda):=\Delta_{1,2}(\lambda)$ of the problems ${\cal
B}_{1,1}(Q)$ and ${\cal B}_{1,2}(Q),$ respectively, take the forms
\begin{equation}\label{2.10}
\Delta_1(\lambda)=-\sin\pi\lambda +\int_{a-\pi}^{\pi-a} w_1(x)\exp(i\lambda x)\,dx,
\end{equation}
\begin{equation}\label{2.11}
\Delta_2(\lambda)=\cos\pi\lambda  +\int_{a-\pi}^{\pi-a} w_2(x)\exp(i\lambda x)\,dx,
\end{equation}
where
\begin{equation}\label{2.12}
w_1(x)=-\frac14(q+ip)\Big(\frac{\pi+a-x}2\Big)-\frac14(q-ip)\Big(\frac{\pi+a+x}2\Big),
\end{equation}
\begin{equation}\label{2.13}
w_2(x)=\frac14(iq-p)\Big(\frac{\pi+a-x}2\Big)-\frac14(iq+p)\Big(\frac{\pi+a+x}2\Big).
\end{equation}

By the standard approach involving Rouch\'e's theorem, one can prove the following assertion, which can also be obtained as a direct
corollary from general Theorem~4 in \cite{But22}.

\begin{lemma}\label{lem1}
For $j=1,2,$ the function $\Delta_j(\lambda)$ has infinitely many zeros $\lambda_{n,j},$ $n\in{\mathbb Z},$ of the form~(\ref{1.3}).
\end{lemma}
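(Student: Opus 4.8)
The plan is to treat each $\Delta_j(\lambda)$ as a perturbation of the ``free'' entire function ($-\sin\pi\lambda$ for $j=1$, $\cos\pi\lambda$ for $j=2$) by an integral term of the form $\int_{a-\pi}^{\pi-a} w_j(x)\exp(i\lambda x)\,dx$, and to locate the zeros via Rouch\'e's theorem. First I would record that, since $w_j\in L_2(a-\pi,\pi-a)$ and $a\ge\pi/2$, so that the interval of integration has half-length $\pi-a\le\pi/2$, the integral term is entire of exponential type at most $\pi-a$. Writing $\lambda=\sigma+i\tau$, the Riemann--Lebesgue lemma gives that $\int_{a-\pi}^{\pi-a} w_j(x)\exp(i\lambda x)\,dx=o(\exp(|\tau|(\pi-a)))$ along the real axis, and in any case is $O(\exp(|\tau|(\pi-a)))$; meanwhile $|\sin\pi\lambda|$ and $|\cos\pi\lambda|$ grow like $\tfrac12\exp(|\tau|\pi)$. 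Since $\pi-a<\pi$, the perturbation is of strictly smaller exponential type, hence negligible off the real axis.

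Next I would set up the comparison. Fix $j$, let $\rho_n=n$ if $j=1$ and $\rho_n=n-1/2$ if $j=2$, so that the free term vanishes exactly at the points $\rho_n$, $n\in\mathbb Z$. Consider the contours $\Gamma_n$ consisting of the boundaries of the squares centered at $\rho_n$ with side $1$ (or small circles of fixed radius $\delta<1/2$ about each $\rho_n$, together with a fixed horizontal strip $|\tau|\le h$ complement). On these contours one has a uniform lower bound $|\sin\pi\lambda|\ge c>0$ (resp. $|\cos\pi\lambda|\ge c>0$) independent of $n$, while by the decay of $\int w_j(x)e^{i\lambda x}dx$ as $|\sigma|\to\infty$ on the real axis and its smallness for $|\tau|\ge h$ with $h$ large, the integral term is $<c$ on $\Gamma_n$ for all $|n|$ large. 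Rouch\'e then yields exactly one zero $\lambda_{n,j}$ of $\Delta_j$ inside each such region for $|n|\ge N$, and finitely many zeros in the remaining bounded region; reindexing gives the labeling $\lambda_{n,j}$, $n\in\mathbb Z$, with $\lambda_{n,j}=\rho_n+o(1)$.

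To upgrade $o(1)$ to the $l_2$-statement $\lambda_{n,j}=\rho_n+\varkappa_{n,j}$ with $\{\varkappa_{n,j}\}\in l_2$, I would substitute $\lambda=\lambda_{n,j}$ into $\Delta_j(\lambda_{n,j})=0$ and expand the free term about $\rho_n$: e.g. for $j=1$, $-\sin\pi\lambda_{n,1}=-\sin\pi(\rho_n+\varkappa_{n,1})=(-1)^{n+1}\pi\varkappa_{n,1}+O(\varkappa_{n,1}^2)$, so that $\varkappa_{n,1}=(-1)^{n}\pi^{-1}\int_{a-\pi}^{\pi-a} w_1(x)\exp(i\lambda_{n,1}x)\,dx+O(\varkappa_{n,1}^2)$; an analogous identity holds for $j=2$. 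Since $\lambda_{n,1}=n+o(1)$ and the interval has length $2(\pi-a)\le\pi$, the numbers $\int_{a-\pi}^{\pi-a} w_1(x)\exp(i\lambda_{n,1}x)\,dx$ are, up to an $l_2$-error coming from $\exp(i\lambda_{n,1}x)-\exp(inx)$ (controlled because $w_1\in L_2$ and $\sum|\varkappa_{n,1}|^2<\infty$ will follow by a standard bootstrap), essentially the Fourier coefficients of $w_1\in L_2(a-\pi,\pi-a)$ with respect to the orthonormal-up-to-constant system $\{\exp(inx)\}$, hence form an $l_2$-sequence by Bessel's inequality. Therefore $\{\varkappa_{n,j}\}\in l_2$, which is (\ref{1.3}).

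The main obstacle is the last step: making the passage from the crude $o(1)$ asymptotics to the sharp $l_2$-bound rigorous, since the integrals $\int w_j(x)\exp(i\lambda_{n,j}x)\,dx$ are taken at the perturbed points $\lambda_{n,j}$ rather than at the integers (resp. half-integers). This is handled by a bootstrap: the uniform bound $\varkappa_{n,j}=o(1)$ lets one write $\exp(i\lambda_{n,j}x)=\exp(i\rho_nx)(1+O(\varkappa_{n,j}))$ uniformly in $x$ on the compact interval, split the integral accordingly, apply Bessel to the main part, and absorb the remainder using $\|w_j\|_{L_2}\sum|\varkappa_{n,j}|\,$-type estimates together with Cauchy--Schwarz; one must be slightly careful that the error terms are genuinely square-summable and not merely bounded, which is why the quadratic term $O(\varkappa_{n,j}^2)$ in the expansion of the free function is harmless. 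As the excerpt notes, all of this also follows at once from the general Theorem~4 of \cite{But22} applied to the sine-type functions $\Delta_1,\Delta_2$, so I would present the Rouch\'e argument only in the brevity appropriate to a lemma and refer to \cite{But22} for the sharp form.
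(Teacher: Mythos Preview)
Your proposal is correct and matches the paper's approach: the paper itself gives no detailed proof of Lemma~\ref{lem1}, merely stating that it follows ``by the standard approach involving Rouch\'e's theorem'' or ``as a direct corollary from general Theorem~4 in \cite{But22}'', and your sketch is precisely a fleshing-out of that standard Rouch\'e argument together with the same reference to \cite{But22}. One small remark: your bootstrap for the $l_2$-bound is slightly circular as written (you invoke $\sum|\varkappa_{n,j}|^2<\infty$ to control the error term that is supposed to yield it); the clean noncircular way is to observe that $\hat w_j(\lambda):=\int_{a-\pi}^{\pi-a}w_j(x)e^{i\lambda x}\,dx$ is entire of exponential type $\le\pi-a$ and lies in $L_2(\mathbb R)$, so the Plancherel--P\'olya inequality applied to the separated sequence $\{\lambda_{n,j}\}$ gives $\sum_n|\hat w_j(\lambda_{n,j})|^2<\infty$ directly, whence $\{\varkappa_{n,j}\}\in l_2$ follows from $\sin\pi\varkappa_{n,j}=\pm\hat w_j(\lambda_{n,j})$ without iteration---but since you explicitly defer the sharp form to \cite{But22}, this is not a gap.
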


We note that Lemma~\ref{lem1} immediately implies the assertion of Theorem~\ref{th1}. The following similar assertion can analogously be
obtained by using (\ref{2.6}) and (\ref{2.7}).

\begin{corollary}\label{th1} Let condition (\ref{1.4}) be fulfilled. For $j\in\{1,2\},$ the problem ${\cal B}_{2,j}(Q)$ has infinitely
many eigenvalues $\theta_{n,j},$ $n\in{\mathbb Z},$ of the form
$$
\theta_{n,j}=n+\frac{j}2+\kappa_{n,j}, \quad \{\kappa_{n,j}\}\in l_2.
$$
\end{corollary}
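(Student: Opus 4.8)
The plan is to mirror the derivation of the main formulae~(\ref{2.4})--(\ref{2.7}), this time keeping track of the entries relevant to the boundary conditions $y_2(0)=y_j(\pi)=0$, i.e.\ the functions $\Delta_{2,j}(\lambda)=y_{j,1}(\pi,\lambda)$ as recorded in~(\ref{2.00}). First I would specialize the representation~(\ref{2.3}) to the case $a\ge\pi/2$ and substitute the explicit form~(\ref{2.0}) of $Y_0$, exactly as was done to obtain~(\ref{2.4})--(\ref{2.7}); in fact formulae~(\ref{2.6}) and~(\ref{2.7}) already give $\Delta_{2,1}(\lambda)$ and $\Delta_{2,2}(\lambda)$ under the canonical assumption~(\ref{1.4}), so this step is essentially a bookkeeping exercise of rewriting those two expressions in the compact exponential form analogous to~(\ref{2.10})--(\ref{2.11}). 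Explicitly, under~(\ref{1.4}) one gets
\begin{equation*}
\Delta_{2,1}(\lambda)=\cos\pi\lambda+\int_{a-\pi}^{\pi-a}\widetilde w_1(x)\exp(i\lambda x)\,dx,\qquad
\Delta_{2,2}(\lambda)=\sin\pi\lambda+\int_{a-\pi}^{\pi-a}\widetilde w_2(x)\exp(i\lambda x)\,dx,
\end{equation*}
with $\widetilde w_j\in L_2(a-\pi,\pi-a)$ built from $q$ and $p$ through formulae of the same shape as~(\ref{2.12})--(\ref{2.13}) (the sign in front of $\cos\pi\lambda$ for $\Delta_{2,1}$ and in front of $\sin\pi\lambda$ for $\Delta_{2,2}$ being the point to get right, together with absorbing the $V_j$ terms).

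Next I would invoke exactly the same machinery that yielded Lemma~\ref{lem1}: either a direct Rouch\'e-type argument or, more economically, the general Theorem~4 of~\cite{But22} applied to the sine-type functions $\cos\pi\lambda+(\text{entire function of exponential type}\le\pi-a)$ and $\sin\pi\lambda+(\text{entire function of exponential type}\le\pi-a)$. Since $a\ge\pi/2$ we have $\pi-a\le\pi/2<\pi$, so the perturbation has strictly smaller exponential type than the leading term and the hypotheses of that theorem are met. This produces, for $\Delta_{2,2}$ (leading term $\sin\pi\lambda$, zeros near the integers), the asymptotics $\theta_{n,2}=n+\kappa_{n,2}$ with $\{\kappa_{n,2}\}\in l_2$, which matches $n+j/2$ for $j=2$ modulo renumbering the integer shift; and for $\Delta_{2,1}$ (leading term $\cos\pi\lambda$, zeros near the half-integers) the asymptotics $\theta_{n,1}=n+\tfrac12+\kappa_{n,1}$ with $\{\kappa_{n,1}\}\in l_2$, which is the $j=1$ case. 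A brief remark reconciling the index shift is all that is needed to state the result uniformly as $\theta_{n,j}=n+j/2+\kappa_{n,j}$.

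I do not anticipate a genuine obstacle here: the statement is, as the text itself says, obtained ``analogously'' to Theorem~\ref{th1}/Lemma~\ref{lem1}, and the only real content is the correct transcription of~(\ref{2.6})--(\ref{2.7}) into exponential form plus a citation to the counting lemma. The single point demanding care is the accounting of the constant terms $\omega_1,\omega_2$ appearing through $V_1,V_2$ in~(\ref{2.8})--(\ref{2.9}): one must check that after the substitution $p_{\nu j}(x)=\tfrac14 q_{\nu j}((\pi+a-x)/2)$ these constants are indeed absorbed into the $L_2$ kernels $\widetilde w_j$ (as boundary values / delta-free contributions at $x=\pm(\pi-a)$, exactly as happens for $w_1,w_2$ in~(\ref{2.12})--(\ref{2.13})), so that the perturbation really is an honest $L_2$-Fourier integral of exponential type $\le\pi-a$ with no leftover term of type exactly $\pi-a$ that is not square-summable-Fourier. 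Once that is verified, the conclusion is immediate from~\cite{But22}.
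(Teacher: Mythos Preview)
Your approach is correct and mirrors exactly what the paper does: it simply says the corollary is ``obtained analogously'' using (\ref{2.6}) and (\ref{2.7}), and you have filled in those details. One small simplification you missed: under assumption~(\ref{1.4}) one has $q_{11}+q_{22}\equiv0$ and $q_{12}-q_{21}\equiv0$, so by~(\ref{2.9}) the constants $\omega_1,\omega_2$ vanish identically and hence $V_1\equiv V_2\equiv0$; thus the ``single point demanding care'' you flag is in fact trivial, and the perturbations in~(\ref{2.6})--(\ref{2.7}) reduce immediately to honest $L_2$-Fourier integrals over $(a-\pi,\pi-a)$ with no extra terms to absorb.
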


\section{Related entire functions}\label{sec3}

\medskip
Consider the entire functions
\begin{equation}\label{3.1}
\Delta_1(\lambda)=-\sin\pi\lambda +\int_{-\pi}^{\pi} w_1(x)\exp(i\lambda x)\,dx, \quad w_1(x)\in L_2(-\pi,\pi),
\end{equation}
\begin{equation}\label{3.2}
\Delta_2(\lambda)=\cos\pi\lambda  +\int_{-\pi}^{\pi} w_2(x)\exp(i\lambda x)\,dx, \quad w_2(x)\in L_2(-\pi,\pi).
\end{equation}
which differ from the characteristic functions (\ref{2.10}) and (\ref{2.11}) by the maximal possible support of the functions $w_1(x)$ and
$w_2(x)$ under the integral. In the next section, we will actually show that it is an only difference, i.e. any functions of the form
(\ref{2.10}) and (\ref{2.11}) with square-integrable $w_1(x)$ and $w_2(x)$ are the characteristic functions of some pair of problems ${\cal
B}_{1,1}(Q)$ and ${\cal B}_{1,2}(Q),$ respectively.

In this section, we first collect necessary properties of the functions (\ref{3.1}) and (\ref{3.2}). It is convenient to obtain them as
direct corollaries of the corresponding general assertions in \cite{But22}.

\begin{lemma}\label{lem2}
The functions $\Delta_1(\lambda)$ and $\Delta_2(\lambda)$ are uniquely determined by specifying their zeros. Moreover, representations
(\ref{1.8}) hold.
\end{lemma}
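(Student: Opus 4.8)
The plan is to recognize the functions $\Delta_1(\lambda)$ and $\Delta_2(\lambda)$ defined in (\ref{3.1}) and (\ref{3.2}) as sine-type functions with asymptotically separated zeros, and then to invoke the general machinery of \cite{But22}. First I would note that $-\sin\pi\lambda$ and $\cos\pi\lambda$ are themselves sine-type functions of exponential type $\pi$ whose zero sets (the integers, respectively the half-integers) are separated. The perturbing terms $\int_{-\pi}^{\pi}w_j(x)\exp(i\lambda x)\,dx$ with $w_j\in L_2(-\pi,\pi)$ are, by the Paley--Wiener theorem, precisely the entire functions of exponential type at most $\pi$ whose restriction to the real axis lies in $L_2$; in particular they tend to zero along the real axis and are $o(e^{\pi|\mathrm{Im}\,\lambda|})$ off it. Hence $\Delta_j(\lambda)$ has the same leading behaviour as its unperturbed part and is a sine-type function of type $\pi$, and by Lemma~\ref{lem1} its zeros $\lambda_{n,j}$ satisfy (\ref{1.3}), so they are asymptotically separated and asymptotically simple. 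This places $\Delta_j$ squarely in the class covered by the relevant theorem of \cite{But22}.

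Next I would apply that general theorem to conclude two things simultaneously. One is that a function in this class is determined by its zeros up to a multiplicative constant, and that the constant is in turn pinned down by the known leading term ($-\sin\pi\lambda$ for $j=1$, $\cos\pi\lambda$ for $j=2$); this gives the uniqueness assertion. The other is the explicit Hadamard-type product formula reconstructing the function from its zeros, which is exactly the content of (\ref{1.8}). For $\Delta_1$, the zeros $\lambda_{n,1}=n+\varkappa_{n,1}$ accumulate at the integers, the genus-one canonical product $\prod_{|n|\in\mathbb N}\frac{\lambda_{n,1}-\lambda}{n}\exp(\lambda/n)$ converges because $\{\varkappa_{n,1}\}\in l_2$, and the prefactor $\pi(\lambda_{0,1}-\lambda)$ supplies the $n=0$ zero together with the normalization making the product asymptotically equal to $-\sin\pi\lambda$ (recalling the classical product $\sin\pi\lambda = \pi\lambda\prod_{|n|\in\mathbb N}(1-\lambda/n)e^{\lambda/n}$). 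For $\Delta_2$, the zeros accumulate at the half-integers $n-1/2$, and the product $\prod_{n\in\mathbb Z}\frac{\lambda_{n,2}-\lambda}{n-1/2}\exp(\lambda/(n-1/2))$ is the corresponding normalized canonical product, asymptotically equal to $\cos\pi\lambda$ (using $\cos\pi\lambda = \prod_{n\in\mathbb Z}(1-\lambda/(n-1/2))e^{\lambda/(n-1/2)}$). The $l_2$-smallness of the shifts guarantees both convergence of these products and that the ratio of $\Delta_j$ to its product tends to $1$, forcing equality.

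The main obstacle, and the only genuinely nontrivial point, is verifying that $\Delta_1$ and $\Delta_2$ really do fall under the hypotheses of the cited theorem of \cite{But22} — that is, checking the sine-type property and the asymptotic separation of zeros in the precise form that theorem demands, rather than merely in spirit. Once the excerpt's Lemma~\ref{lem1} (itself stated as a corollary of \cite[Theorem~4]{But22}) is in hand, this is essentially bookkeeping: the zeros lie in a horizontal strip, are separated for large $|n|$, and the function is bounded below on suitable contours away from the zeros. I would therefore structure the proof as: (1) cite the Paley--Wiener characterization to fix the type and real-axis decay of the integral terms; (2) quote Lemma~\ref{lem1} for the zero asymptotics; (3) invoke the uniqueness-and-reconstruction theorem of \cite{But22} to get both claims at once; (4) match the resulting canonical products with the classical product expansions of $\sin\pi\lambda$ and $\cos\pi\lambda$ to see that the normalizations in (\ref{1.8}) are the correct ones. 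No delicate estimate beyond what \cite{But22} already provides should be needed.
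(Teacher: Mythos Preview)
Your proposal is correct and follows essentially the same route as the paper: both invoke the general representation theorem from \cite{But22} (the paper cites it as Theorem~5) to obtain a canonical-product formula for $\Delta_j(\lambda)$ with undetermined constants, and then fix those constants. The only minor difference is in that last step: the paper reads off explicit limit formulas for the constants $\alpha_j$ and $\beta_j=2-j+\gamma_j$ supplied by the cited theorem and computes $\alpha_1=-\pi$, $\alpha_2=1$, $\gamma_1=\gamma_2=0$ directly, whereas you propose pinning them down by asymptotic comparison with the classical products for $\sin\pi\lambda$ and $\cos\pi\lambda$; both arguments arrive at (\ref{1.8}).
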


\begin{proof}
For $j=1,2,$ Theorem~5 in \cite{But22} gives the representation
\begin{equation}\label{3.3}
\Delta_j(\lambda)=\alpha_j\exp(\beta_j\lambda)\prod_{n\in{\mathbb Z}}\frac{\lambda_{n,j}-\lambda}{\mu_{n,j}}
\exp\Big(\frac\lambda{\mu_{n,j}}\Big), \quad \mu_{n,1}=\left\{\begin{array}{rl} n, & n\ne0,\\ -1, & n=0, \end{array}\right. \quad
\mu_{n,2}=n-\frac12,
\end{equation}
where $\beta_j=2-j+\gamma_j$ and
$$
\alpha_1=-\lim_{\lambda\to0}\frac{\sin\lambda\pi}\lambda =-\pi, \quad \alpha_2=\lim_{\lambda\to0}\cos\lambda\pi =1,
$$
$$
\gamma_1=\lim_{\lambda\to0}\frac{d}{d\lambda}\ln\Big(-\frac{\sin\lambda\pi}\lambda\Big) =0, \quad
\gamma_2=\lim_{\lambda\to0}\frac{d}{d\lambda}\ln\cos\lambda\pi =0.
$$
Hence, formula (\ref{3.3}) for $j=1$ and $j=2$ takes the forms as in (\ref{1.8}), respectively.
\end{proof}

According to the proof of Lemma~\ref{lem2}, the following assertion is a corollary from Theorem~6 in \cite{But22}.

\begin{lemma}\label{lem3}
For any complex sequences $\{\lambda_{n,1}\}_{n\in{\mathbb Z}}$ and $\{\lambda_{n,2}\}_{n\in{\mathbb Z}}$ of the form (\ref{1.3}), the
functions $\Delta_1(\lambda)$ and $\Delta_2(\lambda)$ constructed by the formulae in (\ref{1.8}) have the forms (\ref{3.1}) and (\ref{3.2}),
respectively.
\end{lemma}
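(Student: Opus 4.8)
The plan is to obtain the assertion as a direct corollary of Theorem~6 in \cite{But22}, in exactly the same fashion in which Lemma~\ref{lem2} was deduced from Theorem~5. Fix $j\in\{1,2\}$ and take as the unperturbed function $g_1(\lambda):=-\sin\pi\lambda$ and $g_2(\lambda):=\cos\pi\lambda$; these are already of the forms (\ref{3.1}) and (\ref{3.2}) with the integral term absent, and their zero sets are $\{n\}_{n\in{\mathbb Z}}$ and $\{n-1/2\}_{n\in{\mathbb Z}}$, respectively. In the proof of Lemma~\ref{lem2}, the normalization data attached to $g_j$ in the sense of \cite{But22} were computed, namely $\alpha_1=-\pi$, $\alpha_2=1$, $\beta_j=2-j+\gamma_j$ and $\gamma_1=\gamma_2=0$, together with the convergence-factor indices $\mu_{n,1}$ and $\mu_{n,2}$ appearing in (\ref{3.3}). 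Consequently, the canonical product formed from the prescribed sequence $\{\lambda_{n,j}\}$ with these same exponential factors coincides with the function $\Delta_j(\lambda)$ defined by the corresponding formula in (\ref{1.8}); this identification is precisely the content of the last line of the proof of Lemma~\ref{lem2} and will be reused verbatim.

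Next I would check that Theorem~6 of \cite{But22} applies with this choice of $g_j$. On the one hand, each $g_j$ is a sine-type function of exponential type $\pi$ whose zeros are simple and uniformly (hence asymptotically) separated, which is the standing hypothesis in \cite{But22}. On the other hand, the sequence $\{\lambda_{n,j}\}$ must be an admissible $l_2$-perturbation of the zero set of $g_j$. For $j=2$ this is immediate from (\ref{1.3}): $\lambda_{n,2}-(n-1/2)=\varkappa_{n,2}$ and $\{\varkappa_{n,2}\}\in l_2$. For $j=1$ one has $\lambda_{n,1}-n=\varkappa_{n,1}$ for every $n$ with $\{\varkappa_{n,1}\}\in l_2$, the only subtlety being that the zero of $g_1$ at $n=0$ is $0$ while the convergence-factor index in (\ref{3.3}) is $\mu_{0,1}=-1$; this discrepancy affects only the exponential prefactor and is already absorbed in the passage from (\ref{3.3}) to (\ref{1.8}). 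Applying Theorem~6 then gives that the canonical product built on $\{\lambda_{n,j}\}$, i.e. the function $\Delta_j(\lambda)$ of (\ref{1.8}), differs from $g_j(\lambda)$ by a term of the form $\int_{-\pi}^{\pi}w_j(x)\exp(i\lambda x)\,dx$ with $w_j\in L_2(-\pi,\pi)$. For $j=1$ this is exactly the representation (\ref{3.1}), and for $j=2$ it is exactly (\ref{3.2}).

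The only genuinely delicate point — and the step I expect to require the most care — is the translation of the present situation into the precise formulation of Theorem~6 in \cite{But22}: one must confirm that $-\sin\pi\lambda$ and $\cos\pi\lambda$ meet all quantitative hypotheses there (the sine-type estimate and the separation of zeros) and that the bookkeeping of the $n=0$ factor and of the exponential terms $\exp(\beta_j\lambda)$, carried out in the proof of Lemma~\ref{lem2}, transfers without change. Once this identification is secured, the conclusion of the lemma is an immediate specialization of that general result, and no further computation is needed.
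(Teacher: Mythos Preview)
Your approach is exactly the paper's: the authors simply state that, according to the proof of Lemma~\ref{lem2}, the assertion is a corollary from Theorem~6 in \cite{But22}. Your write-up merely makes explicit the verification of hypotheses that the paper leaves implicit, so there is nothing to add or correct.
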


Along with the functions (\ref{3.1}) and (\ref{3.2}), we consider other functions $\tilde\Delta_1(\lambda)$ and $\tilde\Delta_2(\lambda)$ of
the same forms but with different functions $\tilde w_1(x)$ and $\tilde w_2(x)$ under the integrals, respectively. If a certain
symbol~$\alpha$ denotes an object related to the function $\Delta_j(\lambda),$ then this symbol with tilde $\tilde\alpha$ will denote the
analogous object related to the function $\tilde\Delta_j(\lambda).$ The following lemma immediately follows from Theorem~7 in~\cite{But22}.

\begin{lemma}\label{lem4}
For $j\in\{1,2\}$ and for any fixed $r>0,$ there exists $C_r>0$ such that the estimate
$$
\|w_j-\tilde w_j\|_{L_2(-\pi,\pi)}\le C_r \|\{\lambda_{n,j}-\tilde\lambda_{n,j}\}_{n\in{\mathbb Z}}\|_{l_2}
$$
is fulfilled as soon as $\|\{\lambda_{n,j}-n+(j-1)/2\}_{n\in{\mathbb Z}}\|_{l_2}\le r$ and $\|\{\tilde\lambda_{n,j}-n+(j-1)/2\}_{n\in{\mathbb
Z}}\|_{l_2}\le r.$
\end{lemma}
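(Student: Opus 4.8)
The plan is to deduce Lemma~\ref{lem4} directly from the general uniform-stability theorem for sine-type functions with asymptotically separated zeros, namely Theorem~7 in~\cite{But22}, after verifying that the functions $\Delta_1(\lambda)$ and $\Delta_2(\lambda)$ in (\ref{3.1}) and (\ref{3.2}) belong to the class treated there. First I would record the structural facts: for $j=1,2$ the function $\Delta_j(\lambda)$ is the sum of a fixed sine-type function of exponential type $\pi$ — namely $-\sin\pi\lambda$ for $j=1$ and $\cos\pi\lambda$ for $j=2$, each with simple and exactly (hence asymptotically) separated zeros — and a perturbation $\int_{-\pi}^{\pi}w_j(x)\exp(i\lambda x)\,dx$ which is the Fourier transform of a function $w_j\in L_2(-\pi,\pi)$ and is therefore an entire function of exponential type not exceeding $\pi$ whose restriction to the real axis lies in $L_2(\mathbb R)$. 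By Lemma~\ref{lem1} the zeros of $\Delta_j$ admit the enumeration $\lambda_{n,j}=n+(1-j)/2+\varkappa_{n,j}$ with $\{\varkappa_{n,j}\}\in l_2$, so the zero sequence is an $l_2$-perturbation of the reference sequence $\{\mu_{n,j}\}$ appearing in (\ref{3.3}); this is precisely the normalization under which the stability assertion in~\cite{But22} is formulated.

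Next I would match the quantitative hypotheses. Theorem~7 in~\cite{But22} states that, within the indicated class, the map sending the zero sequence of such a function — measured in $l_2$ against its reference sequence — to the $L_2$-density of its perturbation term is Lipschitz continuous with a Lipschitz constant that is \emph{uniform} over each ball of radius $r$. Applying this separately for $j=1$ and $j=2$, with the hypotheses $\|\{\lambda_{n,j}-n+(j-1)/2\}\|_{l_2}\le r$ and $\|\{\tilde\lambda_{n,j}-n+(j-1)/2\}\|_{l_2}\le r$ guaranteeing that both zero sequences lie in the same ball, one obtains $C_r>0$ with
$$
\|w_j-\tilde w_j\|_{L_2(-\pi,\pi)}\le C_r\,\|\{\lambda_{n,j}-\tilde\lambda_{n,j}\}_{n\in{\mathbb Z}}\|_{l_2},
$$
which is the claim of the lemma. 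Here one also uses Lemma~\ref{lem2} (equivalently, Theorem~5 in~\cite{But22}) to ensure that the perturbation term of $\Delta_j$ is uniquely recovered from its zeros in the first place, so that $w_j$ is a well-defined function of $\{\lambda_{n,j}\}_{n\in{\mathbb Z}}$.

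The only genuinely nontrivial point is the \emph{proof} of the uniform Lipschitz bound rather than its citation. Were one to reprove it here instead of invoking~\cite{But22}, the main obstacle would be to control the difference of the two Hadamard-type canonical products in (\ref{1.8}) — one built from $\{\lambda_{n,j}\}$, the other from $\{\tilde\lambda_{n,j}\}$ — by the $l_2$-distance $\|\{\lambda_{n,j}-\tilde\lambda_{n,j}\}\|_{l_2}$ \emph{uniformly} on a ball. This requires uniform-in-$r$ lower bounds for $|\Delta_j(\lambda)|$ on a suitable family of horizontal lines or contours separating the zeros (which follow from the asymptotic separation of the $\lambda_{n,j}$ together with an $l_2$-tail estimate on $\{\varkappa_{n,j}\}$), a representation of $w_j$ via the inverse Fourier transform of $\Delta_j$ minus its sine-type part evaluated along those lines, and finally passage from the resulting bound on $\|\Delta_j-\tilde\Delta_j\|$ along the lines to the $l_2$-bound on the zero differences through the logarithmic derivative of the canonical product. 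All of this is exactly what is carried out in~\cite{But22}, so here it suffices to check the hypotheses and cite that theorem.
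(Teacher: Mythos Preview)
Your approach is correct and is essentially identical to the paper's: the paper simply states that the lemma ``immediately follows from Theorem~7 in~\cite{But22}'' without further argument, and you do the same, merely expanding on why the hypotheses of that theorem are met. Your additional verification that $\Delta_1$ and $\Delta_2$ fit into the framework of~\cite{But22} and your remarks on what a self-contained proof would entail are helpful elaborations, but they do not constitute a different route.
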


Further, let $\{n_k\}_{k\in{\mathbb Z}}$ be an increasing sequence of integer numbers, and let $\{\varkappa_k\}_{k\in{\mathbb Z}}$ be a
complex sequence in $l_2.$ Put $z_k:=n_k+\varkappa_k,$ $k\in{\mathbb Z}.$ Assume for convenience that multiple $z_k$ are neighboring, i.e.
$$
z_k=z_{k+1}=\ldots=z_{k+m_k-1},
$$
where $m_k$ is the multiplicity of $z_k$ in the sequence $\{z_k\}_{k\in{\mathbb Z}}.$ Denote ${\cal S}:=\{k:z_k\ne z_{k-1}, k\in{\mathbb
Z}\},$ and put
$$
e_{k+\nu}(x):=x^\nu\exp(iz_kx), \quad k\in{\cal S}, \quad \nu=\overline{0,m_k-1}.
$$

For proving Theorems~\ref{th2}--\ref{th4}, we will also need the following auxiliary assertion.

\begin{lemma}\label{lem5}
The system $\{e_k(x)\}_{k\in{\mathbb Z}}$ is complete (a Riesz basis) in $L_2(-b,b)$ if and only if so is the system
$\{\exp(in_kx)\}_{k\in{\mathbb Z}}.$
\end{lemma}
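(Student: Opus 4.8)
The plan is to reduce completeness (resp. Riesz-basisness) of $\{e_k(x)\}_{k\in\mathbb Z}$ in $L_2(-b,b)$ to the same property of the pure exponential system $\{\exp(in_kx)\}_{k\in\mathbb Z}$ by treating the perturbation $\{z_k\}$ away from $\{n_k\}$ as a Hilbert--Schmidt (hence compact, and in fact uniformly small at infinity) correction, and by noting that the finitely many multiplicity-induced generalized exponentials $x^\nu\exp(iz_kx)$ span the same finite-dimensional spaces as limits of distinct shifted exponentials. Concretely, since $\{\varkappa_k\}\in l_2$, only finitely many $z_k$ can coincide, so $\mathcal S$ differs from $\mathbb Z$ only through a finite set of coalescence points; outside a finite index set we simply have $e_k(x)=\exp(iz_kx)$ with $z_k=n_k+\varkappa_k$ and $\sum_k|z_k-n_k|^2<\infty$.

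First I would handle the ``generic'' tail. The map $T:\exp(in_kx)\mapsto \exp(iz_kx)$, extended linearly, satisfies $\|(\exp(iz_kx)-\exp(in_kx))\|_{L_2(-b,b)}\le C|z_k-n_k|$ (expand $e^{i(z_k-n_k)x}-1$ and use $|x|\le b$), so $\{\exp(iz_kx)-\exp(in_kx)\}_{k\in\mathbb Z}$ is $l_2$-summable in norm and the difference operator $T-I$ is Hilbert--Schmidt, in particular compact. A standard stability principle (Bari/Paley--Wiener type: a system obtained from a complete system, resp. a Riesz basis, by a compact, resp. a ``small plus finite-rank'', perturbation with $l_2$-control retains completeness, resp. Riesz-basisness, possibly after correcting finitely many vectors) then transfers the property in both directions, since $T$ and $T^{-1}$ differ from the identity by such perturbations. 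For the Riesz-basis case one invokes the Bari theorem in the form: if $\{f_k\}$ is a Riesz basis and $\sum_k\|g_k-f_k\|^2<\infty$ while $\{g_k\}$ is complete (or $\omega$-linearly independent), then $\{g_k\}$ is a Riesz basis; the completeness half is the elementary direction.

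Next I would deal with the finitely many coalescence points. If $z_{k}=\dots=z_{k+m_k-1}$, replace in the original \emph{exponential} system the $m_k$ distinct frequencies $n_k,\dots,n_{k+m_k-1}$ by frequencies $z_k+\varepsilon\delta_0,\dots,z_k+\varepsilon\delta_{m_k-1}$ with distinct $\delta_\nu$ and let $\varepsilon\to0$: the linear span of $\{\exp(i(z_k+\varepsilon\delta_\nu)x)\}_{\nu=0}^{m_k-1}$ converges (as a subspace of the finite-dimensional ambient span, equivalently via divided differences / the confluent Vandermonde) to the span of $\{x^\nu\exp(iz_kx)\}_{\nu=0}^{m_k-1}$. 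Thus swapping the distinct-frequency block for the confluent block $\{e_{k+\nu}\}$ is a finite-rank modification that preserves both completeness and Riesz-basisness (for the latter one checks that the passage $\exp\mapsto x^\nu\exp$ on a single confluent block is implemented by an invertible lower-triangular matrix times divided differences, so it sends a Riesz basis of that finite-dimensional block to another basis with uniformly two-sided bounds). Combining the tail argument with these finitely many block replacements yields the equivalence in both the completeness and the Riesz-basis formulations.

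The main obstacle I expect is the Riesz-basis direction rather than completeness: one must make sure that the $l_2$-perturbation bound, the finite-rank confluent-block replacement, and the passage between $\{e_k\}$ and $\{\exp(iz_kx)\}$ all produce \emph{uniform} (index-independent) two-sided frame bounds, so that the resulting system is genuinely a Riesz basis and not merely a complete minimal system. This is exactly where one leans on the quantitative Bari/Kadec-type stability theorems (as already used in \cite{But22}); once the perturbation is recognized as Hilbert--Schmidt with an $l_2$ tail plus a genuinely finite-rank correction at the coalescence points, the uniform bounds follow and the lemma is proved.
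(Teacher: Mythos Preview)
Your Riesz-basis argument via quadratic closeness and Bari's theorem is essentially what the paper does, and your treatment of the finitely many coalescence blocks would work (though the paper handles them more simply by just noting that finitely many terms cannot spoil the inequality $\sum_k\|e_k-\exp(in_kx)\|_{L_2(-b,b)}^2<\infty$). However, there is a genuine gap in your \emph{completeness} direction, and you have in fact inverted where the difficulty lies.

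The operator $T:\exp(in_kx)\mapsto\exp(iz_kx)$ is only defined on the linear span of $\{\exp(in_kx)\}$; to speak of $T-I$ as a Hilbert--Schmidt operator on $L_2(-b,b)$ you would already need $\{\exp(in_kx)\}$ to be a Riesz basis, which is precisely one of the hypotheses under discussion. More fundamentally, there is no ``standard stability principle'' asserting that a compact or Hilbert--Schmidt perturbation of a merely \emph{complete} system remains complete: a rank-one change can destroy completeness. Stability of completeness for exponential systems under $l_2$-perturbation of the frequencies is a nontrivial fact that requires a separate argument. The paper supplies exactly that argument: if $\{\exp(in_kx)\}$ is incomplete, there is a nonzero $u(\lambda)=\int_{-b}^b f(x)e^{i\lambda x}\,dx$ vanishing at all $n_k$; one then forms the Blaschke-type multiplier $F(\lambda)=\prod_k(z_k-\lambda)/(n_k-\lambda)$, shows $|F(\lambda)|\le C_\delta$ away from the poles, and concludes via the maximum modulus principle and the Paley--Wiener theorem that $v(\lambda)=F(\lambda)u(\lambda)$ again has the form $\int_{-b}^b g(x)e^{i\lambda x}\,dx$ with $g\ne0$ and vanishes (with multiplicities) at all $z_k$, whence $\{e_k\}$ is incomplete. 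This entire-function step is the actual content of the completeness equivalence; once it is in hand, Riesz-basisness follows immediately from quadratic closeness plus Bari, which is why the paper disposes of that part in one line.
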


\begin{proof}
Let us first prove that the systems $\{e_k(x)\}_{k\in{\mathbb Z}}$ and $\{\exp(in_kx)\}_{k\in{\mathbb Z}}$ can be complete in $L_2(-b,b)$
only simultaneously. Let $\{\exp(in_kx)\}_{k\in{\mathbb Z}}$ be incomplete in $L_2(-b,b).$ Then there exists a nonzero entire function
$u(\lambda)$ of the form
$$
u(\lambda)=\int_{-b}^bf(x)\exp(i\lambda x)\,dx, \quad f(x)\in L_2(-b,b),
$$
whose zeros include the sequence $\{n_k\}_{k\in{\mathbb Z}}.$ Consider the meromorphic function
$$
F(\lambda):=\prod_{k\in{\mathbb Z}}\frac{z_k-\lambda}{n_k-\lambda}.
$$
Then the function $v(\lambda):=F(\lambda)u(\lambda)$ has a similar form:
\begin{equation}\label{3.5}
v(\lambda)=\int_{-b}^bg(x)\exp(i\lambda x)\,dx, \quad g(x)\in L_2(-b,b).
\end{equation}
Indeed, the estimate $|F(\lambda)|\le C_\delta$ holds in $\{\lambda:|\lambda- n_k|\ge\delta,\,k\in{\mathbb Z}\}$ for each fixed $\delta>0$
(see, e.g., the proof of Lemma~2 in \cite{But22}). Obviously, the function $v(\lambda),$ after removing singularities, is entire in~$\lambda$
and, by virtue of the maximum modulus principle, its exponential type does not exceed $b.$ Moreover, the estimate $|v(\lambda)|\le
C_\delta|u(\lambda)|$ holds, in particular, on the horizontal line ${\rm Im}\lambda=\delta.$ Hence, $v(\lambda)\in
L_2(-\infty+i\delta,+\infty+i\delta),$ and (\ref{3.5}) follows from the Paley--Wiener theorem.

Since the function $v(\lambda)$ is not identically zero and its zeros include the sequence $\{z_k\}_{k\in{\mathbb Z}},$ the system
$\{e_k(x)\}_{k\in{\mathbb Z}}$ is not complete in $L_2(-b,b)$ too. Conversely, assuming the incompleteness of $\{e_k(x)\}_{k\in{\mathbb Z}}$
one can analogously show that $\{\exp(in_kx)\}_{k\in{\mathbb Z}}$ is not complete.

Finally, we note that for proving the simultaneous Riesz basisness of the systems $\{e_k(x)\}_{k\in{\mathbb Z}}$ and
$\{\exp(in_kx)\}_{k\in{\mathbb Z}},$ it remains to establish their quadratical closeness, i.e. the inequality
$$
\sum_{k\in{\mathbb Z}}\int_{-b}^b|e_k(x)-\exp(in_kx)|^2\,dx<\infty,
$$
which, in turn, is obvious.
\end{proof}

\section{Solution of the inverse problem}\label{sec4}

\medskip
Before proceeding directly to the proof of Theorems~\ref{th2}--\ref{th6}, we fulfil some preparatory work. For $j=1,2,$ put
$\mu_{k,j}:=\lambda_{n_{k,j},j}$ and consider the set ${\cal S}_j:=\{k:\mu_{k,j}\ne \mu_{k-1,j}, k\in{\mathbb Z}\},$ where, as in the
preceding section, we assume without loss of generality that
$$
\mu_{k,j}=\mu_{k+1,j}=\ldots=\mu_{k+m_{k,j}-1,j},
$$
where $m_{k,j}$ is the multiplicity of $\mu_{k,j}$ in the sequence $\{\mu_{k,j}\}_{k\in{\mathbb Z}}.$ Define
\begin{equation}\label{4.0}
e_{k+\nu,j}(x):=(ix)^\nu\exp(i\mu_{k,j}x), \quad k\in{\cal S}_j, \quad \nu=\overline{0,m_{k,j}-1}, \quad j=1,2.
\end{equation}

Further, consider relations (\ref{2.12}) and (\ref{2.13}). Obviously, for any functions $p(x)$ and $q(x)$ in $L_2(a,\pi)$ they uniquely
determine some functions $w_1(x)$ and $w_2(x)$ in $L_2(a-\pi,\pi-a).$ The inverse assertion holds too. Indeed, rewrite (\ref{2.12}) and
(\ref{2.13}) in the form
\begin{equation}\label{4.1}
4w_1(x)=-(q+ip)\Big(\frac{\pi+a-x}2\Big)-(q-ip)\Big(\frac{\pi+a+x}2\Big),
\end{equation}
\begin{equation}\label{4.2}
4iw_2(x)=-(q+ip)\Big(\frac{\pi+a-x}2\Big)+(q-ip)\Big(\frac{\pi+a+x}2\Big).
\end{equation}
Summing up  (\ref{4.1}) and (\ref{4.2}) and subtracting one from the other we get
$$
2(w_1+iw_2)(x)=-(q+ip)\Big(\frac{\pi+a-x}2\Big), \quad 2(w_1-iw_2)(-x)=-(q-ip)\Big(\frac{\pi+a-x}2\Big),
$$
Acting in the same way for the obtained relations and changing the variable, we get
\begin{equation}\label{4.3}
q(x)=-(w_1+iw_2)(\pi+a-2x)-(w_1-iw_2)(2x-\pi-a),
\end{equation}
\begin{equation}\label{4.4}
p(x)=(iw_1-w_2)(\pi+a-2x)-(iw_1+w_2)(2x-\pi-a),
\end{equation}
which also imply the estimates
\begin{equation}\label{4.5}
\|p\|_{L_2(a,\pi)},\|q\|_{L_2(a,\pi)}\le\sqrt2\Big(\|w_1\|_{L_2(a-\pi,\pi-a)}+\|w_2\|_{L_2(a-\pi,\pi-a)}\Big).
\end{equation}

Now we are in position to give proofs of the theorems.

\medskip
\noindent{\it Proof of Theorem~\ref{th2}.} For $j=1,2,$ differentiating $\nu=\overline{0,m_{k,j}-1}$ times the functions $\Delta_j(\lambda)$
in (\ref{2.10}) and (\ref{2.11}), where $k\in{\cal S}_j,$ and, afterwards, substituting $\lambda=\mu_{k,j}$ into the obtained derivatives and
using the definition (\ref{4.0}), we arrive at the relations
\begin{equation}\label{4.6}
\beta_{n,j}=\int_{a-\pi}^{\pi-a} w_j(x)e_{n,j}(x)\,dx, \quad n\in{\mathbb Z},
\end{equation}
where
\begin{equation}\label{4.7}
\beta_{k+\nu,j}=f_j^{(\nu)}(\mu_{k,j}), \quad k\in{\cal S}_j, \quad \nu=\overline{0,m_{k,j}-1}, \quad
f_j(\lambda):=\left\{\begin{array}{rl}\sin\pi\lambda, & j=1,\\-\cos\pi\lambda, &j=2.\end{array}\right.
\end{equation}
Note that relations (\ref{4.0}), (\ref{4.6}) and (\ref{4.7}) actually mean that the sequences $\{\lambda_{n_{k,1},1}\}_{k\in{\mathbb Z}}$ and
$\{\lambda_{n_{k,2},2}\}_{k\in{\mathbb Z}}$ are subspectra of the problems ${\cal B}_{1,1}(Q)$ and ${\cal B}_{1,2}(Q),$ respectively.

Let these subspectra uniquely determine $Q(x).$ Assume, to the contrary, that at least one of the systems $\{\exp(in_{k,j}x)\}_{k\in{\mathbb
Z}},$ $j=1,2,$ is not complete in $L_2(a-\pi,\pi-a).$ Then, according to Lemma~\ref{lem5}, the same can be said about the systems
$\{e_{n,j}(x)\}_{n\in {\mathbb Z}},$ $j=1,2.$ Hence, there exists another pair $(\tilde w_1,\tilde w_2)$ of functions in $L_2(a-\pi,\pi-a)$
that is different from $(w_1,w_2),$ and the relations
\begin{equation}\label{4.8}
\beta_{n,j}=\int_{a-\pi}^{\pi-a}\tilde w_j(x)e_{n,j}(x)\,dx, \quad n\in{\mathbb Z}, \quad j=1,2,
\end{equation}
are fulfilled. To these $\tilde w_1(x)$ and $\tilde w_2(x),$ via formulae (\ref{4.3}) and (\ref{4.4}) there correspond another pair of
functions $\tilde p(x)$ and $\tilde q(x)$ such that $(\tilde p,\tilde q)\ne(p,q).$ Then, analogously to the above, relations (\ref{4.0}),
(\ref{4.7}) and (\ref{4.8}) will mean that $\{\lambda_{n_{k,1},1}\}_{k\in{\mathbb Z}}$ and $\{\lambda_{n_{k,2},2}\}_{k\in{\mathbb Z}}$ are
subspectra also of the problems ${\cal B}_{1,1}(\tilde Q)$ and ${\cal B}_{1,2}(\tilde Q),$ respectively, with $\tilde Q\ne Q.$ The obtained
contradiction proves the necessity.

For the sufficiency, note that, by virtue of Lemma~\ref{lem5}, each system $\{e_{n,j}(x)\}_{n\in {\mathbb Z}},$ $j=1,2,$ is complete in
$L_2(a-\pi,\pi-a).$ Thus, according to (\ref{4.6}) and (\ref{4.7}), the functions $w_1(x)$ and $w_2(x)$ are uniquely determined by the
subspectra. Then relations (\ref{4.3}) and (\ref{4.4}) imply uniqueness of $p(x)$ and $q(x).$  $\hfill\Box$

\medskip
\noindent{\it Proof of Theorem~\ref{th3}.} Let the systems $\{\exp(in_{k,1}x)\}_{k\in{\mathbb Z}}$ and  $\{\exp(in_{k,2}x)\}_{k\in{\mathbb
Z}}$ be Riesz bases in $L_2(a-\pi,\pi-a).$ Then, by Lemma~\ref{lem5}, so are the systems $\{e_{n,1}(x)\}_{n\in {\mathbb Z}}$ and
$\{e_{n,2}(x)\}_{n\in {\mathbb Z}}$ defined by~(\ref{4.0}). According to (\ref{1.6}) and (\ref{4.7}), for sufficiently large $|k|,$ we have
$$
\beta_{k,1}=\sin\pi(n_{k,1}+\varkappa_{k,1})=(-1)^{n_{k,1}}\sin\pi\varkappa_{k,1},
$$
$$
\beta_{k,2}=-\cos\pi\Big(n_{k,2}-\frac12+\varkappa_{k,2}\Big)=-(-1)^{n_{k,2}}\sin\pi\varkappa_{k,2}.
$$
Hence, $\{\beta_{k,j}\}_{k\in\mathbb Z}\in l_2.$ Thus, for each $j=1,2,$ there exists a unique function $w_j(x)\in L_2(a-\pi,\pi-a)$ obeying
(\ref{4.6}). Let $q(x)$ and $p(x)$ be constructed by formulae (\ref{4.3}) and (\ref{4.4}) with these $w_1(x)$ and $w_2(x),$ which, in turn,
obviously appear in the representations (\ref{2.10}) and (\ref{2.11}) for the characteristic functions of the corresponding problems ${\cal
B}_{1,1}(Q)$ and ${\cal B}_{1,2}(Q).$ Then relations (\ref{4.0}), (\ref{4.6}) and (\ref{4.7}) mean that the given sequences
$\{\mu_{k,1}\}_{k\in{\mathbb Z}}$ and  $\{\mu_{k,2}\}_{k\in{\mathbb Z}}$ are their subspectra, respectively, which finishes the proof.
$\hfill\Box$

\medskip
The proof of Theorem~3 is constructive and gives the following algorithm for solving Inverse Problem~2 under the hypotheses of
Theorem~\ref{th3}

\medskip
{\bf Algorithm 1.} {\it Let the subspectra $\{\lambda_{n_k,j}\}_{k\in {\mathbb Z}},\,j=0,1,$ be given. Then:
\\
(i) For $j=1,2,$ construct $\{e_{n,j}(x)\}_{n\in {\mathbb Z}}$ by (\ref{4.0}) and $\{\beta_{n,j}\}_{n\in {\mathbb Z}}$ by (\ref{4.7}) using
$\mu_{k,j}=\lambda_{n_k,j}$ for $k\in {\mathbb Z};$
\\
(ii) Find the functions $w_1(x)$ and $w_2(x)$ by the formulae
$$
w_1(x)=\sum_{n=-\infty}^\infty \beta_{n,1}e_{n,1}^*(x), \quad w_2(x)=\sum_{n=-\infty}^\infty \beta_{n,2}e_{n,2}^*(x),
$$
where $\{e_{n,1}^*(x)\}_{n\in {\mathbb Z}}$ and $\{e_{n,2}^*(x)\}_{n\in {\mathbb Z}}$ are the Riesz bases that are biorthogonal to the bases
$\{\overline{e_{n,1}(x)}\}_{n\in {\mathbb Z}}$ and $\{\overline{e_{n,2}(x)}\}_{n\in {\mathbb Z}},$ respectively;
\\
(iii) Construct the functions $q(x)$ and $p(x)$ by formulae (\ref{4.3}) and (\ref{4.4}).}

\medskip
\noindent{\it Proof of Theorem~\ref{th4}.} The necessity follows from the asymptotics (\ref{1.3}) along with the definition of $m$-th
subspectra. For the sufficiency, it is enough to note that the system $\{\exp(imkx)\}_{k\in {\mathbb Z}}$ is an orthogonal basis in
$L_2(-\pi/m,\pi/m),$ and to apply Theorem~\ref{th3}. $\hfill\Box$

\medskip
\noindent{\it Proof of Theorem~\ref{th5}.} By necessity, the asymptotics (\ref{1.3}) was already established in Theorem~\ref{th1}. According
to Lemma~2, the characteristic functions $\Delta_1(\lambda)$ and $\Delta_2(\lambda)$ have the representations (\ref{1.8}). Thus, condition
(ii) easily follows from representations (\ref{2.10}) and (\ref{2.11}). For the sufficiency, we construct the functions $\Delta_1(\lambda)$
and $\Delta_2(\lambda)$ by (\ref{1.8}) using the given sequences $\{\lambda_{n,1}\}_{n\in{\mathbb Z}}$ and $\{\lambda_{n,2}\}_{n\in{\mathbb
Z}}.$  By Lemma~\ref{lem3}, these functions have the forms (\ref{3.1}) and (\ref{3.2}) with some functions $w_1(x),w_2(x)\in L_2(-\pi,\pi),$
respectively. Further, condition (ii) along with the Paley--Wiener theorem implies $w_j(x)=0$ a.e.~on $(-\pi,a-\pi)\cup(\pi-a,\pi),$ i.e.
representations (\ref{2.10}) and (\ref{2.11}) hold. Construct the functions $p(x)$ and $q(x)$ by formulae (\ref{4.3}) and (\ref{4.4}) and
consider the corresponding problems ${\cal B}_{1,1}(Q)$ and ${\cal B}_{1,2}(Q).$ Then, as in Section~\ref{sec2}, one can show that
$\Delta_1(\lambda)$ and $\Delta_2(\lambda)$ are their characteristic functions, respectively. $\hfill\Box$

\medskip
\noindent{\it Proof of Theorem~\ref{th6}.} Applying Lemma~\ref{lem4} to the characteristic functions (\ref{2.10}) and (\ref{2.11}), we get
$$
\|w_j-\tilde w_j\|_{L_2(a-\pi,\pi-a)}\le C_r \|\{\lambda_{n,j}-\tilde\lambda_{n,j}\}_{n\in{\mathbb Z}}\|_{l_2}, \quad j=1,2,
$$
as soon as the conditions of Theorem~\ref{th6} are met. Hence, the estimate (\ref{1.9}) follows from the estimates~(\ref{4.5}) along with the
linearity of relations (\ref{4.3}) and (\ref{4.4}). $\hfill\Box$

\medskip
Finally, note that Algorithm~1 can be used also for solving Inverse Problem~1 by putting $n_{k,j}:=k$ for $j=1,2.$ However, it can be
simplified in the following way because the complete spectra are known, and one can use an orthogonal basis.

\medskip
{\bf Algorithm 2.} {\it Let the complete spectra $\{\lambda_{n,j}\}_{k\in {\mathbb Z}},\,j=0,1,$ be given. Then:
\\
(i) Construct the functions $\Delta_1(\lambda)$ and $\Delta_2(\lambda)$ by formulae (\ref{1.8});
\\
(ii) In accordance with (\ref{2.10}) and (\ref{2.11}), find the functions $w_1(x)$ and $w_2(x)$ by the formulae
$$
w_1(x)=\frac1{2\pi}\sum_{n=-\infty}^\infty \Delta_1(n)\exp(-inx), \quad w_2(x)=\frac1{2\pi}\sum_{n=-\infty}^\infty
\Big(\Delta_2(n)-(-1)^n\Big)\exp(-inx);
$$
\\
(iii) Construct the functions $q(x)$ and $p(x)$ by formulae (\ref{4.3}) and (\ref{4.4}).}

\medskip
\begin{acknowledgments}
This research was supported by grant of the Russian Science Foundation No. 22-21-00509, https://rscf.ru/project/22-21-00509/.
\end{acknowledgments}

\bigskip

\end{document}